\newcommand{\C}{\mathbb{C}}
\newcommand{\M}{\mathcal{M}}
\newcommand{\Span}{\mathbf{Span}}
\newcommand{\RG}{\mathbf{RG}}
\newcommand{\Grpd}{\mathbf{Grpd}}
\newcommand{\Cat}{\mathbf{Cat}}
\newcommand{\MG}{\mathbf{MG}}
\newcommand{\UMG}{\mathbf{UMG}}
\newcommand{\PreGrpd}{\mathbf{PreGrpd}}
\newcommand{\Kite}{\mathbf{DiKite}}
\newcommand{\MKite}{\mathbf{MKite}}
\newcommand{\cod}{\mathrm{cod}}
\newcommand{\dom}{\mathrm{dom}}
\newcommand{\midop}{\mathrm{mid}}
\newtheorem{definition}{Definition}
\newtheorem{theorem}{Theorem}
\newtheorem{proposition}{Proposition}
\newtheorem{lemma}{Lemma}
\newtheorem{remark}{Remark}
\begin{document}

\date{PAB --- \today.}

\title[On Naturally and Weakly Mal'tsev Categories]{On Naturally and Weakly Mal'tsev Categories}

\author[N. Martins-Ferreira]{Nelson Martins-Ferreira$^{*}$}

\thanks{$(*)$ Polytechnic of Leiria, Portugal}
\address{Department of Mathematics, ESTG, Campus 2\\
Morro do Lena – Alto do Vieiro
Apartado 4163\\
2411-901 Leiria – Portugal}
\address{CDRSP, Rua de Portugal - Zona Industrial\\ 2430-028 Marinha Grande}
\email{martins.ferreira@ipleiria.pt}

\subjclass[2020]{18A05, 18A15, 18B40, 18C10, 18D35}



\begin{abstract}

We explore a hierarchy of notions in categorical algebra—Mal'tsev categories (where every reflexive relation is symmetric), naturally Mal'tsev categories (where every reflexive graph underlies a unique internal groupoid structure, also known as the Lawvere Condition), and weakly Mal'tsev categories (defined via the joint epimorphicity of local product injections). While varying in generality and historical prominence, these notions can be uniformly described using suitable classes of spans and a new structural condition, called the Kite Condition.

To this end, we reintroduce the notion of a \emph{multiplicative directed kite}, an internal categorical structure that generalizes both internal categories and internal pregroupoids. We also define and study the concept of a \emph{weakly Mal'tsev object}, uncovering new structural insights and examples.

While several of the results we present are known in the context of categories with finite limits, we show that they remain valid—and in fact gain clarity—in the broader setting of categories admitting only pullbacks of split epimorphisms along split epimorphisms.

\end{abstract}

\maketitle

\section{Preamble}

There has long been a debate in the philosophy of mathematics as to whether \emph{good theorems are corollaries of good definitions}, or whether \emph{good definitions are functions of good theorems}. While we do not aim to settle this controversy, in this paper, we search for minimal structural requirements on a category under which key results concerning Mal'tsev, naturally Mal'tsev, and weakly Mal'tsev categories can still be established.

As an illustrative example, we observe that several conditions---typically formulated in the context of categories with finite limits---remain equivalent in any category with local products:
\begin{enumerate}
    \item every reflexive relation is an equivalence relation;
    \item every reflexive relation is transitive;
    \item every reflexive relation is symmetric;
    \item every local product is extremal.
\end{enumerate}

At this level of abstraction, it is widely accepted that any of these equivalent conditions may serve to define a Mal'tsev category (see also Theorem~\ref{them: mainII}). However, a closer analysis reveals that the condition ``every reflexive relation is symmetric'' can be meaningfully formulated even in an arbitrary category---without assuming the existence of any specific limits. This observation is significant given the importance of Mal'tsev categories, which generalize key properties of the category of groups, particularly the behavior of equivalence relations and internal categorical structures.

This line of reasoning aligns with the original definition of a weakly Mal'tsev category and resonates with the spirit of naturally Mal'tsev categories. While each of the three Mal'tsev-type conditions admits several equivalent characterizations under mild limit or colimit assumptions, we adopt definitions that require only the weakest necessary assumptions—thus capturing the most general categorical setting in which each notion can still be meaningfully defined.

As will become apparent, the notion of a \emph{local product}—understood as the pullback of a split epimorphism along another split epimorphism—plays a central role in this framework. For this reason, we begin with a detailed discussion of this construction and introduce some notation that will be used throughout the paper.

\begin{definition}
A \emph{local product} is a diagram of the form
\begin{equation}\label{local product 0}
\vcenter{\xymatrix{
A \ar@<-.5ex>[r]_-{e_1} & E \ar@<-.5ex>[l]_-{p_1} \ar@<.5ex>[r]^-{p_2} & C \ar@<.5ex>[l]^-{e_2}
}}
\end{equation}
satisfying \( p_1 e_1 = 1_A \), \( p_2 e_2 = 1_C \), and for which there exists a second diagram
\begin{equation}\label{local span}
\vcenter{\xymatrix@!0@=4em{
A \ar@<.5ex>[r]^-{f} & B \ar@<.5ex>[l]^-{r} \ar@<-.5ex>[r]_-{s} & C \ar@<-.5ex>[l]_-{g}
}}
\end{equation}
with \( f r = 1_B = g s \), such that the span \( (E, p_1, p_2) \) is the pullback of \( g \) along \( f \). The morphisms
\( e_1 = \langle 1_A, s f \rangle \) and \( e_2 = \langle r g, 1_C \rangle \) are the induced morphisms into the pullback,
forming a split square as shown:
\begin{equation}\label{split square}
\xymatrix@!0@=4em{
E \ar@<.5ex>[r]^-{p_2} \ar@<-.5ex>[d]_-{p_1} & C \ar@<.5ex>[l]^-{e_2} \ar@<-.5ex>[d]_-{g} \\
A \ar@<.5ex>[r]^-{f} \ar@<-.5ex>[u]_-{e_1} & B \ar@<.5ex>[l]^-{r} \ar@<-.5ex>[u]_-{s}.
}
\end{equation}
\end{definition}

In categories where pullbacks of split monomorphisms along split monomorphisms exist—a very mild assumption (see the example below)—local products admit an intrinsic characterization; that is, one which avoids reference to the morphisms \( f \) and \( g \) in the split square above.

\begin{proposition}\label{prop1}
Let $\C$ have pullbacks of split monomorphisms along split monomorphisms. A diagram of the form
\begin{equation}\label{local product (1)}
\vcenter{\xymatrix{
A \ar@<-.5ex>[r]_-{e_1} & E \ar@<-.5ex>[l]_-{p_1} \ar@<.5ex>[r]^-{p_2} & C \ar@<.5ex>[l]^-{e_2}
}}
\end{equation}
is a local product if and only if
\begin{enumerate}
\item \( p_1 e_1 = 1_A \), \( p_2 e_2 = 1_C \);
\item \( e_1 p_1 e_2 p_2 = e_2 p_2 e_1 p_1 \);
\item the pair \( (p_1, p_2) \) is jointly monic;
\item for every \( u\colon Z \to A \) and \( v\colon Z \to C \), if
\begin{align*}
p_1 e_2 p_2 e_1 u &= p_1 e_2 v, \\
p_2 e_1 u &= p_2 e_1 p_1 e_2 v,
\end{align*}
then there exists a unique \( w\colon Z \to E \) such that \( p_1 w = u \) and \( p_2 w = v \).
\end{enumerate}
\end{proposition}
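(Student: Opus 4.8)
The plan is to prove the two implications separately, the converse (sufficiency of (1)--(4)) being the substantial half.

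For \textbf{necessity}, I would start from a local product realised by a split square as in \eqref{split square}. Condition (1) is part of the data, and (3) holds because $p_1,p_2$ are the legs of a pullback, hence jointly monic. For (2) I would use joint monicity to reduce the equality to checking it after composing with $p_1$ and with $p_2$; invoking $p_1e_1=1_A$, $p_2e_2=1_C$, $p_2e_1=sf$, $p_1e_2=rg$, $fr=1_B=gs$ and the commutativity $fp_1=gp_2$, both reductions collapse to instances of $fp_1=gp_2$. For (4), the same substitutions turn the two hypotheses into $rfu=rgv$ and $sfu=sgv$; since $r$ is a split monomorphism (being a section of $f$), the first already yields $fu=gv$, and then $E$, as the pullback of $g$ along $f$, supplies the desired $w=\langle u,v\rangle$, unique by (3).

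For \textbf{sufficiency}, assume (1)--(4). The crux is recovering the base object $B$ without reference to $f,g$. Since $p_1e_1=1_A$ and $p_2e_2=1_C$, both $e_1$ and $e_2$ are split monomorphisms, so the standing hypothesis guarantees that the pullback of $e_1$ along $e_2$ exists; I take this to be $B$, with projections $r\colon B\to A$ and $s\colon B\to C$, so that $e_1r=e_2s$ and $(r,s)$ is jointly monic. I then define $f\colon A\to B$ by $rf=p_1e_2p_2e_1$, $sf=p_2e_1$, and $g\colon C\to B$ by $rg=p_1e_2$, $sg=p_2e_1p_1e_2$. These morphisms are legitimate because condition (2), written $e_1p_1e_2p_2=e_2p_2e_1p_1$, together with $p_1e_1=1_A$ and $p_2e_2=1_C$, yields $e_1p_1e_2p_2e_1=e_2p_2e_1$ and $e_1p_1e_2=e_2p_2e_1p_1e_2$, which are exactly the conditions for $(p_1e_2p_2e_1,\,p_2e_1)$ and $(p_1e_2,\,p_2e_1p_1e_2)$ to factor through the pullback $B$.

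It then remains to check four things: that $fr=1_B$ and $gs=1_B$ (composing with $r$ and $s$ and using that $e_1r=e_2s$ forces $p_2e_1r=s$ and $p_1e_2s=r$, after which joint monicity of $(r,s)$ finishes it); that $fp_1=gp_2$ (again composing with $r$ and $s$, this reduces to two applications of (2)); that $(E,p_1,p_2)$ is the pullback of $g$ along $f$, for which joint monicity is (3) and the universal property is precisely (4), since by joint monicity of $(r,s)$ a cone $u'\colon Z\to A$, $v'\colon Z\to C$ satisfies $fu'=gv'$ if and only if $p_1e_2p_2e_1u'=p_1e_2v'$ and $p_2e_1u'=p_2e_1p_1e_2v'$; and finally that $e_1=\langle 1_A,sf\rangle$ and $e_2=\langle rg,1_C\rangle$, which follows from $p_1e_1=1_A$, $p_2e_1=sf$, $p_1e_2=rg$, $p_2e_2=1_C$ and joint monicity of $(p_1,p_2)$. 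The main obstacle — and the only place where the hypothesis on $\C$ is genuinely used — is the realisation that $B$ must be constructed as the pullback of $e_1$ along $e_2$ (rather than as some colimit-type quotient of $A$ and $C$); once that is in place, the rest is careful bookkeeping with the composites $p_1e_2p_2e_1$, $p_2e_1p_1e_2$, $p_2e_1$, $p_1e_2$ and repeated use of (2).
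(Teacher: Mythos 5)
Your proposal is correct and takes essentially the same route as the paper: the paper's (very terse) proof likewise constructs $B$ as the pullback of the split monomorphism $e_1$ along the split monomorphism $e_2$ and defines $f=\langle p_1e_2p_2e_1,\,p_2e_1\rangle$ and $g=\langle p_1e_2,\,p_2e_1p_1e_2\rangle$, exactly as you do. Your write-up simply supplies the routine verifications (necessity, $fr=1_B=gs$, $fp_1=gp_2$, the universal property via condition (4), and $e_1=\langle 1_A,sf\rangle$, $e_2=\langle rg,1_C\rangle$) that the paper leaves to the reader, and these checks are carried out correctly.
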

 \begin{proof}
The necessity of all conditions is clear. For sufficiency, observe that the span \( (B, r, s) \) arises as the pullback of the split monomorphism \( e_1 \) along the split monomorphism \( e_2 \). The morphisms \( f \) and \( g \) are then obtained as $
f = \langle p_1 e_2 p_2 e_1,\, p_2 e_1 \rangle$ and
$g = \langle p_1 e_2,\, p_2 e_1 p_1 e_2 \rangle$.
\end{proof}

An example of a category that admits pullbacks of monomorphisms along monomorphisms—but not all pullbacks—is the category whose objects are the natural numbers \( 0, 1, 2, \ldots, N-1 \), for some fixed \( N \), such as \( N = 2^{64} \). A morphism from \( n \) to \( m \) is a pair \( (m, u) \), where \( u = (u_1, \ldots, u_n) \) is a vector of indices of length \( n \), with each \( u_i \leq m \). The domain of \( (m, u) \) is \( n \) (the length of the vector \( u \)), and the codomain is \( m \). Composition is defined by
\[
(m, u) \circ (n, v) = (m, u \circ v),
\]
where \( (u \circ v)(j) = u(v(j)) \), following the usual interpretation of composition for index vectors.

This category admits pullbacks of monomorphisms along arbitrary morphisms, but not all pullbacks. It arises naturally in the modeling of array and indexing operations in programming languages such as \emph{Matlab} and \emph{Octave}. For instance, the built-in function \texttt{ismember} has the signature \texttt{[k, p] = ismember(f, u)}. When \( u \) is a vector with unique entries, the pair \( (\texttt{find}(k), p(k)) \) can be interpreted as the projections of the pullback of \( f \) along \( u \).

\section{Introduction}

This paper continues the work initiated in \cite{MF38}, which itself builds as a corollary of results from \cite{MF17} and \cite{MF7}, both of which trace their origin back to \cite{MF3}, where weakly Mal'tsev categories were first introduced. A recent account of naturally Mal'tsev categories can be found in \cite{Bournetall}.

A comprehensive and historically significant treatment of Mal'tsev and Goursat categories appears in \cite{carboni1993}, which builds on earlier foundational works such as \cite{Fay}, \cite{Mal}, and \cite{Kell}, and is also closely related to \cite{JK}, to mention only the most relevant contributions in relation to our present development.

As remarked in \cite{carboni1993}, it has long been understood that the study of relations can be extended beyond regular categories to any finitely complete category equipped with a proper factorization system \((\mathcal{E}, \mathcal{M})\), by redefining a relation from \( A \) to \( B \) as a subobject of \( A \times B \) belonging to the class \( \mathcal{M} \).

In this paper, we take this generalization one step further: rather than fixing a proper factorization system, 
we consider an arbitrary class $\M$ of spans—possibly not even jointly monic—that serves as the ambient context for \emph{generalized relations}.
Within this framework, many classical results concerning Mal'tsev, naturally Mal'tsev, and weakly Mal'tsev categories are revisited, thereby revealing a broader scope of applicability beyond the usual assumptions of finite limits or regularity. 

More specifically, all of our results hold in any category with local products. Moreover, depending on the choice of the class \( \M \), each of the three Mal'tsev-like notions can be recovered as follows:
\begin{itemize}
    \item taking \( \M \) to be the class of all spans whose legs admit kernel pairs yields the structure of a \emph{naturally Mal'tsev category};
    \item restricting \( \M \) further to the class of jointly monic spans whose legs have kernel pairs gives the notion of a \emph{Mal'tsev category};
    \item and taking \( \M \) to consist of jointly strongly monic spans whose legs have kernel pairs corresponds to the \emph{weakly Mal'tsev} case.
\end{itemize}

Although this level of generality may initially appear abstract, it will be shown to reveal previously unnoticed connections and support the development of novel internal categorical structures.

The paper is organized along the following lines.

First, we review the basic definitions and notations for the internal categorical structures needed in an arbitrary category \( \C \).

From that point onward, we assume that the category \( \C \) has local products; that is, for every diagram of the form
\begin{equation}\label{diag: ABC}
\vcenter{\xymatrix@!0@=4em{
A \ar@<.5ex>[r]^-{f}  & B \ar@<.5ex>[l]^-{r} \ar@<-.5ex>[r]_-{s} & C \ar@<-.5ex>[l]_-{g}
}}
\end{equation}
with \( f r = 1_B = g s \), there exists a canonical diagram
\begin{equation}\label{local product}
\vcenter{\xymatrix{
A \ar@<-.5ex>[r]_-{e_1} & A \times_B C \ar@<-.5ex>[l]_-{\pi_1} \ar@<.5ex>[r]^-{\pi_2} & C \ar@<.5ex>[l]^-{e_2}
}}
\end{equation}
 such that the span \( (A \times_B C, \pi_1, \pi_2) \) is the pullback of \( g \) along \( f \) while \( e_1 = \langle 1_A, s f \rangle \) and \( e_2 = \langle r g, 1_C \rangle \) are the induced morphisms into the pullback.

We then prove the main result that characterizes naturally Mal'tsev categories in the context of local products.

Next, we turn our attention to weakly Mal'tsev categories by introducing the notion of a \emph{weakly Mal'tsev object} and studying some of its consequences, particularly in yielding new examples and characterizations.

Finally, we revisit the unifying result that brings together the three notions—Mal'tsev, naturally Mal'tsev, and weakly Mal'tsev—within the broader setting of categories with local products.

\section{Review of internal categorical structures in arbitrary categories}

The purpose of this section is to establish the notation used throughout the remainder of the paper. At the same time, we aim to propose a standard formulation of the relevant definitions, in the spirit of~\cite{MF47}. An early work that highlights the importance of defining internal categorical structures in arbitrary categories—albeit with a focus on varieties of algebras—is the paper~\cite{JP}.

Throughout the remainder of this section, $\C$ will denote an arbitrary category.

\subsection{Internal Reflexive Graphs}

An internal reflexive graph is a diagram in $\C$ of the form 
\begin{equation}\label{diag: reflexive graph}\xymatrix{C_1 \ar@<1ex>[r]^{d} \ar@<-1ex>[r]_{c} & C_0 \ar[l]|{e} }
\end{equation}
 in which the condition $de=1_{C_0}=ce$ holds true. It can be represented as a five-tuple $(C_1,C_0,d,e,c)$.
A morphism between reflexive graphs, say form $(C_1,C_0,d,e,c)$ to $(C'_1,C'_0,d',e',c')$, is a pair of morphisms $(f_1,f_0)$, displayed as
\begin{equation}\label{diag: morphims of reflexive graphs}\xymatrix{C_1 \ar@<1ex>[r]^{d} \ar@<-1ex>[r]_{c}\ar[d]_{f_1} & C_0 \ar[l]|{e}\ar[d]^{f_0} \\
C'_1 \ar@<1ex>[r]^{d'} \ar@<-1ex>[r]_{c'} & C'_0 \ar[l]|{e'} }
\end{equation}
 and such that $d'f_1=f_0d$, $c'f_1=f_0c$ and $f_1e=e'f_0$.
The category of reflexive graphs internal to $\C$ is denoted $\RG(\C)$.

\subsection{Internal Spans}
An internal span is a diagram in $\C$ of the form 
\begin{equation}\label{diag: span}
\xymatrix{
& D \ar[ld]_{d} \ar[rd]^{c} \\
D_0 && D_1
}
\end{equation}
with no additional conditions. We denote such a span by $(D,d,c)$. The objects $D_0$ and $D_1$ are intentionally omitted, as they are seldom referred to explicitly.
 The category of spans in $\C$ is denoted $\Span(\C)$. There is a canonical functor $\RG(\C) \to \Span(\C)$ that assigns to each reflexive graph $(C_1, C_0, d, e, c)$ the span $(C_1, d, c)$.

  Any class $\M$ of spans in $\C$ can be seen as a full subcategory $\M\to\Span(\C)$. For the sake of consistency we will write $\Span(\C,\M)$ to denote the full subcategory of $\Span(\C)$ determined by the spans in the class $\M$. Similarly we obtain $\RG(\C,\M)$ as the full subcategory of $\RG(\C)$ whose span part is in $\M$, in other words, it can be seen as a pullback of categories 
\[\xymatrix{\RG(\C,\M)\ar[r]\ar[d]&\RG(\C)\ar[d]\\ \Span(\C,\M) \ar[r]&\Span(\C).}\]

\begin{remark}
We will be particularly interested in spans $(D,d,c)$ for which the kernel pairs of both $d$ and $c$ exist. Accordingly, throughout the remainder of this paper, the category $\Span(\C)$ will denote the category whose objects are precisely such spans. Rather than requiring the ambient category $\C$ to admit all kernel pairs, we restrict our attention to those spans whose legs admit kernel pairs. (For further details, see the subsection on the kernel pair construction below.)
\end{remark}

\subsection{Internal Multiplicative Graphs} \label{subsec: multiplicative graph}

In~\cite{MF38} (see also~\cite{JP} for an earlier appearance of this notion), the concept referred to as a \emph{multiplicative graph} corresponds to what we now call a \emph{unital multiplicative graph} (see below).

In this work
we denote by $\MG(\C)$ the category of multiplicative graphs internal to $\C$. Its objects are diagrams in $\C$ of the form
\begin{equation}\label{diag: multiplicative graph}
\xymatrix{
C_2 \ar@<2.5ex>[r]^{\pi_2} \ar@<-2.5ex>[r]_{\pi_1} \ar[r]|{m} & C_1 \ar@<1.5ex>[l]|{e_1} \ar@<-1.5ex>[l]|{e_2} \ar@<1ex>[r]^{d} \ar@<-1ex>[r]_{c} & C_0 \ar[l]|{e}
}
\end{equation}
in which $(C_1, C_0, d, e, c)$ is a reflexive graph, and the following conditions hold:
\begin{align}
d \circ m &= d \circ \pi_2 \\
c \circ m &= c \circ \pi_1,
\end{align}
the square 
\begin{equation}
\xymatrix{
C_2 \ar[r]^{\pi_2} \ar[d]_{\pi_1} & C_1 \ar[d]^{c} \\
C_1 \ar[r]^{d} & C_0
}
\end{equation}
is a pullback, and the morphisms $e_1$ and $e_2$ are uniquely determined as $e_1 = \langle 1_{C_1}, ed \rangle$ and $e_2 = \langle ec, 1_{C_1} \rangle$.

A multiplicative graph, displayed as in diagram~\eqref{diag: multiplicative graph}, will be referred to as a six-tuple $(C_1, C_0, d, e, c, m)$. The canonical morphisms $\pi_1$, $\pi_2$ arising from the pullback, as well as the induced morphisms $e_1$, $e_2$, are understood to be implicit.


Morphisms in $\MG(\C)$ are triples $(f_2, f_1, f_0)$, where $(f_1, f_0)$ is a morphism of reflexive graphs and $f_2 = f_1 \times_{f_0} f_1$ satisfies the conditions:
\[
f_1 \circ m = m' \circ f_2, \quad f_2 \circ e_2 = e'_2 \circ f_1, \quad f_2 \circ e_1 = e'_1 \circ f_1.
\]
When convenient, we write $f \colon C \to C'$ for such a morphism, where $f = (f_2, f_1, f_0)$, and
\[
C = (C_1, C_0, d, e, c, m), \quad C' = (C_1', C_0', d', e', c', m').
\]

There is a canonical forgetful functor from the category of multiplicative graphs $\MG(\C)$ to the category of reflexive graphs $\RG(\C)$, denoted by $F$. Analogously to the construction in the previous subsection, a class $\M$ of spans that defines the subcategory $\RG(\C, \M)$ also determines the subcategory $\MG(\C, \M)$, fitting into a pullback square:
\[
\xymatrix{
\MG(\C, \M) \ar[r]^{F^{\M}} \ar[d] & \RG(\C, \M) \ar[d] \\
\MG(\C) \ar[r]^{F} & \RG(\C).
}
\]
The functor $F^{\M}$ is simply the restriction of $F$ to those multiplicative graphs whose span components lie in $\M$.


Later, we will consider situations in which this forgetful functor admits a section—that is, every reflexive graph in \( \RG(\C, \M) \) is canonically equipped with a compatible multiplication. The notion of a \emph{unital multiplicative graph} (recalled below) ensures that such a multiplication is uniquely determined, provided that suitable conditions are imposed on the span \( (D, d, c) \) and on the induced pair of morphisms \( (e_1, e_2) \) into the pullback.

For example, when $\M$ is the class of all relations, this amounts to the assertion that every reflexive relation is transitive.

\subsection{Internal Unital Multiplicative Graphs}

An internal unital multiplicative graph is an internal multiplicative graph
\begin{equation}\label{diag: unital multiplicative graph}
\xymatrix{
C_2 \ar@<2.5ex>[r]^{\pi_2} \ar@<-2.5ex>[r]_{\pi_1} \ar[r]|{m} & C_1 \ar@<1.5ex>[l]|{e_1} \ar@<-1.5ex>[l]|{e_2} \ar@<1ex>[r]^{d} \ar@<-1ex>[r]_{c} & C_0 \ar[l]|{e}
}
\end{equation}
satisfying the unitality conditions:
\begin{align}
m \circ e_1 &= 1_{C_1}, \\
m \circ e_2 &= 1_{C_1}.
\end{align}

The category of internal unital multiplicative graphs (which in \cite{MF38} was called the category of multiplicative graphs!) is now denoted by $\UMG(\C)$. There are canonical forgetful functors $\UMG(\C) \to \MG(\C)$ and $\UMG(\C) \to \RG(\C)$. We also consider the relative category $\UMG(\C, \M)$.

When the span $(C_1, d, c)$ is jointly monic --- so that the reflexive graph $(C_1, C_0, d, e, c)$ corresponds to a reflexive relation --- the morphism $m$ is uniquely determined, reflecting the fact that the reflexive relation is transitive. In this case, the unitality condition is redundant. However, in the more general situations we will encounter, the multiplication $m$ becomes uniquely determined only once the unitality condition is imposed.

\subsection{The Kernel Pair Construction}\label{subsec: kernel pair construction}

On several occasions, we will require a method for transforming a span into a reflexive graph. This will be achieved using a general construction, referred to as the \emph{kernel pair construction}. The applicability of this construction relies on two key assumptions: that the given spans admit kernel pairs, and that the category $\C$ has local products. These assumptions are essential.

Let $(D, d, c)$ be a span. The kernel pair construction proceeds by forming the kernel pairs of the morphisms $d$ and $c$, and combining them with the pullbacks of their projections and the induced morphisms, as illustrated below:
\[
\xymatrix@!0@=4em{
D(d,c) \ar@<.5ex>[r]^-{p_2} \ar@<-.5ex>[d]_-{p_1}
& D(c) \ar@<.5ex>[l]^-{e_2} \ar@<-.5ex>[d]_-{c_1} \ar@<0ex>[r]^{c_2}
& D \ar[d]_{c}
 \\
D(d)
 \ar@<.5ex>[r]^-{d_2} \ar@<-.5ex>[u]_-{e_1} \ar[d]_{d_1}
& D
 \ar@<.5ex>[l]^-{\Delta} \ar@<-.5ex>[u]_-{\Delta} \ar[r]^{c} \ar[d]_{d}
& D_1 \\
D \ar[r]^{d}
& D_0.}
\]

When $\C$ is the category of sets and maps, we may see an element in $D$ as a sort of an arrow in which its domain and codomain are from different sets, so that $x\in D$ may be  displayed as
\[\xymatrix{D_0\ni d(x)\ar[r]^{x} & c(x)\in D_1,}\]
with this interpretation the elements in $D(d)$ are the pairs $(x,y)$, where $x,y\in D$ are such that $d(x)=d(y)$ and they may be pictured as
\[\xymatrix{c(x) & d(x)=d(y) \ar[l]_-{x} \ar[r]^-{y} & c(y)}\] or in a simpler form as 
\[\xymatrix{\cdot & \cdot \ar[l]_-{x} \ar[r]^-{y} & \cdot}\]
similarly, a pair $(y,z)\in D(c)$ is pictured as
\[\xymatrix{\cdot  \ar[r]^-{y} & \cdot & \cdot \ar[l]_-{z} }\]
in the same way, the elements in $D(d,c)$ are the triples $(x,y,z)$ such that $d(x)=d(y)$ and $c(y)=c(z)$, which may be pictured as
\[\xymatrix{\cdot & \cdot \ar[l]_{x} \ar[r]^-{y} & \cdot & \cdot \ar[l]_-{z} }\]

In other words, when $\C$ is the category of sets and maps we have:
\begin{eqnarray*}
d_1(x,y)=x\\
d_2(x,y)=y\\
c_1(y,z)=y\\
c_2(y,z)=z\\
\Delta(y)=(y,y)\\
p_1(x,y,z)=(x,y)\\
p_2(x,y,z)=(y,z)\\
e_1(x,y)=(x,y,y)\\
e_2(y,z)=(y,y,z).
\end{eqnarray*}

The kernel pair construction gives rise to a functor
\[
K \colon \Span(\C) \to \RG(\C),
\]
defined on objects by
\[
K(D, d, c) = \big(D(d, c),\, D,\, d_1 \circ p_1,\, e_1 \circ \Delta,\, c_2 \circ p_2\big).
\]
For later reference, we introduce the following notation:
\[
\dom := d_1 \circ p_1, \qquad \midop := d_2 \circ p_1 = c_1 \circ p_2, \qquad \cod := c_2 \circ p_2.
\]
With this, for any three morphisms $x, y, z \colon X \to D$ such that $dx = dy$ and $cy = cz$, we have
\[
\dom\langle x, y, z \rangle = x, \qquad \midop\langle x, y, z \rangle = y, \qquad \cod\langle x, y, z \rangle = z.
\]

We will also adopt the suggestive (and slightly abusive) notation
\[
\Delta(x) = \langle x, x, x \rangle,
\]
so that $\Delta \colon D \to D(d, c)$ denotes the diagonal morphism mapping an element to the triple with all entries equal.

With this notation, the kernel pair construction simplifies to
\[
K(D, d, c) = (D(d, c),\,D,\, \dom,\, \Delta,\, \cod).
\]

In some contexts, it will be convenient to consider the kernel pair construction where the roles of $d$ and $c$ are interchanged. This produces a reflexive graph
\[
K(D, c, d) = (D(c, d),\,D,\, \dom,\, \Delta,\, \cod),
\]
which is interpreted informally by the diagram
\[
\xymatrix{\cdot \ar[r]^-{x} & \cdot & \cdot \ar[l]_-{y} \ar[r]^-{z} & \cdot}
\]
where $(x, y, z) \in D(c, d)$. In this case, the structural maps act as
\[
\dom\langle x, y, z \rangle = z, \qquad \midop\langle x, y, z \rangle = y, \qquad \cod\langle x, y, z \rangle = x.
\]

An alternative way to define the kernel pair construction—assuming the existence of all pullbacks and binary products—is via the pullback
\[
\xymatrix{
D(d, c) \ar[r] \ar[d] & D \ar[d]^{\langle d, c \rangle} \\
D \times D \ar[r]^{d \times c} & D_0 \times D_1.
}
\]

Given a class of spans $\M$, we say that $\M$ is \emph{pullback stable} if, for every span $(D, d, c)$ in $\M$ and every pair of morphisms $u \colon U \to D_0$ and $v \colon V \to D_1$, the span $(B, d', c')$ obtained by taking pullbacks as illustrated below
\[
\xymatrix{
& & B \ar[dl] \ar@<-.5ex>@/_/[lldd]_{d'} \ar[rd] \ar@<.5ex>@/^/[rrdd]^{c'} \\
& A \ar[ld] \ar[rd] && C \ar[ld] \ar[rd] \\
U \ar[rd]_{u} & & D \ar[ld]^{d} \ar[rd]_{c} && V \ar[ld]^{v} \\
& D_0 && D_1
}
\]
also belongs to $\M$.

This aligns with the standard notion of pullback stability: when pullbacks and binary products exist, the span $(B, d', c')$ can be obtained via the pullback
\[
\xymatrix{
B \ar[r] \ar[d]_{\langle d', c' \rangle} & D \ar[d]^{\langle d, c \rangle} \\
U \times V \ar[r]^{u \times v} & D_0 \times D_1,
}
\]
so that if $(D, d, c) \in \M$, then $(B, d', c') \in \M$ as well.

The requirement that the class $\M$ be pullback stable is explicitly used in the proof of the main theorem in~\cite{MF38}. However, we will show that it is sufficient for $\M$ to satisfy the following two weaker conditions, assuming that the ambient category admits local products:
\begin{enumerate}
\item If $(D, d, c) \in \M$, then the kernel pairs of $d$ and $c$ exist, and the associated span arising from the kernel pair construction, $(D(c,d), \dom, \cod)$, also lies in $\M$;
\item The span component of every local product belongs to $\M$.
\end{enumerate}

\subsection{Pregroupoids}
 
 A pregroupoid internal to $\C$ is a span 
\[\xymatrix{& D \ar[ld]_{d} \ar[rd]^{c}\\D0 && D1}\]
together with a pregroupoid structure (note that we are only considering those spans for which the kernel pair construction exists). A pregroupoid structure is  a morphism $p\colon{D({d,c})\to D}$, such that 
\begin{gather}
p e_1 =d_1\quad\text{and}\quad p e_2 =c_2,\label{Mal'tsev-conditions}\\
dp =d c_2 p_2\quad\text{and}\quad cp = c d_1 p_1.\label{Domain-and-Codomain}
\end{gather}

The object $D(d,c)$ is obtained together with the maps $$d_1,d_2,c_1,c_2,p_1,p_2,e_1,e_2$$ by means of the kernel pair construction. As explained in the previous subsection. In set-theoretical terms, the object $D({d,c})$ consists on those triples $(x,y,z)$ of arrows in $D$ for which $d(x)=d(y)$ and $c(y)=c(z)$, so that the conditions $(\ref{Mal'tsev-conditions})$ can be translated as
\[p(x,y,y)=x,\quad p(y,y,z)=z\] while the conditions $(\ref{Domain-and-Codomain})$ mean
\[dp(x,y,z)=d(z), \quad cp(x,y,z)=c(x). \]

In this way we may also form the category of pregroupoids with its span part drawn from a class $\M$. It will be denoted as $\PreGrpd(\C,\M)$.

A pregroupoid is associative if for every $x,y,z\in D$, as above, together with $u,v\in D$ with $d(z)=d(u)$ and $c(u)=c(v)$, we have
\begin{equation}\label{associative for a pregroupoid}
p(p(x,y,z),u,v)=p(x,y,p(z,u,v)).
\end{equation}

An associative pregroupoid is equivalent to a Kock pregroupoid \cite{K1,K2}, while the concept of a pseudogroupoid is studied in \cite{JP2} in connection with commutator theory. A recent account of these notions can be found in \cite{Bournetall}, where the notion of an autonomous pregroupoid is also considered.

In future work, we intend to investigate the relationships between the following conditions, interpreted within the framework of commutator theory as developed in \cite{JP2}, under the standing assumptions of local products and a relative class of spans admitting the kernel pair construction.

\begin{enumerate}
\item The forgetful functor from the category of pseudogroupoids to the category of spans is an isomorphism;
\item The forgetful functor from the category of autonomous pregroupoids to the category of spans is an isomorphism;
\item The forgetful functor from the category of associative pregroupoids to the category of spans is an isomorphism;
\item The forgetful functor from the category of pregroupoids to the category of spans is an isomorphism;
\item The forgetful functor from the category of pseudogroupoids to the category of spans has a section;
\item The forgetful functor from the category of autonomous pregroupoids to the category of spans has a section;
\item The forgetful functor from the category of associative pregroupoids to the category of spans has a section;
\item The forgetful functor from the category of pregroupoids to the category of spans has a section.
\end{enumerate}

Part of this study (not involving pseudogroupoids) was sketched in~\cite{MF17} in the context of local products, kernel pairs, and coequalizers. It is now understood that the assumption of existing kernel pairs can be relaxed by restricting the analysis to spans whose legs admit kernel pairs. We further hope to eliminate the need for coequalizers entirely.

\subsection{Internal categories and internal groupoids}

An internal category is a unital multiplicative graph in which the multiplication is associative. The category of internal categories to $\C$ is denoted $\Cat(\C)$. A groupoid is a an internal category with inverses. It can be seen as an associative unital multiplicative graph in which the square 
\begin{equation}
\xymatrix{C_2 \ar[r]^{\pi_2}\ar[d]_{\pi_1} & C_1 \ar[d]^{d}\\
 C_1\ar[r]^{d} & C_0}
\end{equation}
is a pullback (see e.g. \cite{MF3}). 
The category of internal groupoids internal to $\C$ is denoted $\Grpd(\C)$. 



In a similar manner as before we define the categories $\Cat(\C,\M)$ and $\Grpd(\C,\M)$ of internal categories and internal groupoids in $\C$ with respect to a given class $\M$.

\subsection{Multiplicative directed kites}

A kite internal to $\C$ is a diagram of the form
\begin{equation}\label{diag: kite}
\vcenter{\xymatrix@!0@=4em{A \ar@<.5ex>[r]^-{f} \ar[rd]_-{\alpha} & B
\ar@<.5ex>[l]^-{r}
\ar@<-.5ex>[r]_-{s}
\ar[d]^-{\beta} & C \ar@<-.5ex>[l]_-{g} \ar[ld]^-{\gamma}\\
& D}}
\end{equation}
such that $fr=1_{B}=gs$, $\alpha r=\beta=\gamma s$.

A directed kite is a kite together with a span $(D,d,c)$ such that $d\alpha=d\beta f$, $c\beta g=c\gamma$.

 Once again, if the span part of a kite is required to be in $\M$ then it is an object in the  category $\Kite(\C,\M)$, where the morphisms are the natural transformations between such diagrams.

Under the presence of local products, each diagram such as $(\ref{diag: kite})$ induces a diagram
\begin{equation}\label{diag: kite with local product}
\vcenter{\xymatrix@!0@=3em{ & C \ar@<.5ex>[ld]^-{e_2} \ar@<-.5ex>[rd]_-{g}
\ar@/^/[rrrd]^-{\gamma} \\
A\times_{B}C \ar@<.5ex>[ru]^-{\pi_2}
\ar@<-.5ex>[rd]_-{\pi_1} && B \ar@<.5ex>[ld]^-{r} \ar@<-.5ex>[lu]_-{s}
 \ar[rr]|-{\beta} && D\\
& A \ar@<.5ex>[ru]^-{f} \ar@<-.5ex>[lu]_-{e_1} \ar@/_/[urrr]_-{\alpha}}}\end{equation}
in which the double diamond  is constructed as a local product. It can be seen as a double split epimorphism as well. Recall that the morphisms $e_1, e_2$ are given by $e_1=\langle 1_A, sf\rangle$ and $e_2=\langle rg, 1_C\rangle$.

A multiplication on a directed kite is a morphism $m\colon{A\times_{B} C\to D}$ such that $dm=d\gamma\pi_2$, $cm=c\alpha\pi_1$, $me_1=\alpha$ and $me_2=\gamma$.

Once again, we will be interested in considering the forgetful functor from the category of multiplicative directed kites into the category of directed kites, with direction (that is the span part) drawn from a given class $\M$.

\subsection{Particular cases}

We have now defined the  categories and the respective canonical functors that we will consider between them.

\[
\xymatrix{
\Grpd(\C,\M)\ar[d]\ar[r]^{F_4} & \RG(\C,\M)\ar@{=}[d]\\
\Cat(\C,\M)\ar[d]\ar[r]^{F_3} & \RG(\C,\M)\ar@{=}[d]\\
\UMG(\C,\M)\ar[r]^{F_2} & \RG(\C,\M)\ar@<-.5ex>[d]\\
\PreGrpd(\C,\M)\ar[u]\ar[d]\ar[r]^{F_1} & \Span(\C,\M)\ar@<-.5ex>[d]\ar@<-.5ex>[u] & \M\ar[l]_-{\cong}\\
\MKite(\C,\M)\ar[r]^{F_0} & \Kite(\C,\M)\ar@<-.5ex>[u]_{}}
\]
Let us also present a selection of examples and special cases of directed kites, along with an analysis of the existence and uniqueness of their associated multiplications.

\begin{enumerate}
\item If $(C_1,C_0,d,e,c)$ is a reflexive graph then the following diagram is a directed kite
\begin{equation}\label{diag: kite1}
\vcenter{\xymatrix{C_1 \ar@<.5ex>[r]^-{d} \ar@{=}[rd]_-{} & C_0
\ar@<.5ex>[l]^-{e}
\ar@<-.5ex>[r]_-{e}
\ar[d]^-{e} & C_1 \ar@<-.5ex>[l]_-{c} \ar@{=}[ld]^-{}\\
& C_1\ar[dl]_{d}\ar[rd]^{c}\\\cdot&&\cdot}}
\end{equation}

The collection of multiplicative structures on this directed kite is in one-to-one correspondence with the collection of unital multiplicative structures on the given reflexive graph.

\item If $(C_1,C_0,d,e,c,m)$ is a unital multiplicative graph then the following diagram is a directed kite.
\begin{equation}\label{diag: kite2}
\vcenter{\xymatrix{C_2 \ar@<.5ex>[r]^-{\pi_2} \ar[rd]_-{m} & C_1
\ar@<.5ex>[l]^-{e_2}
\ar@<-.5ex>[r]_-{e_1}
\ar@{=}[d]^-{} & C_1 \ar@<-.5ex>[l]_-{\pi_1} \ar[ld]^-{m}\\
& C_1\ar[dl]_{d}\ar[rd]^{c}\\\cdot&&\cdot}}
\end{equation}

This directed kite has a unique multiplication if and only if the unital multiplicative graph is associative, that is,  an internal category.

\item If $(C_1,C_0,d,e,c,m)$ is  an internal category then the following diagram is a directed kite
\begin{equation}\label{diag: kite3}
\vcenter{\xymatrix{C_2 \ar@<.5ex>[r]^-{m} \ar[rd]_-{\pi_2} & C_1
\ar@<.5ex>[l]^-{e_2}
\ar@<-.5ex>[r]_-{e_1}
\ar@{=}[d]^-{} & C_1 \ar@<-.5ex>[l]_-{m} \ar[ld]^-{\pi_1}\\
& C_1\ar[dl]_{d}\ar[rd]^{c}\\\cdot&&\cdot}}
\end{equation}

This directed kite has a unique multiplication if and only if the internal category is an internal groupoid (see \cite{MF3}). 

\item If $(f_1,f_0)\colon{(C_1,C_0,d,e,c)\to (C'_1,C'_0,d',e',c')}$ is a morphism of reflexive graphs then the following diagram is a directed kite
\begin{equation}\label{diag: kite4}
\vcenter{\xymatrix{C_1 \ar@<.5ex>[r]^-{d} \ar@{->}[rd]_-{f_1} & C_0
\ar@<.5ex>[l]^-{e}
\ar@<-.5ex>[r]_-{e}
\ar[d]^-{e'f_0} & C_1 \ar@<-.5ex>[l]_-{c} \ar@{->}[ld]^-{f_1}\\
& C'_1\ar[dl]_{d'}\ar[rd]^{c'}\\\cdot&&\cdot}}
\end{equation}
If this directed kite has a unique multiplication and if the reflexive graphs $C$ and $C'$ are unital multiplicative graphs, with  multiplications $m\colon{C_2\to C_1}$ and $m'\colon{C'_2\to C'_1}$, then $f_1 m=m'f_2$. The notation is borrowed from subsection \ref{subsec: multiplicative graph}.

\item If $(D,d,c)$ is a span then the kernel pair construction gives a directed kite as follows
\begin{equation}\label{diag: kite5}
\vcenter{\xymatrix{D(d) \ar@<.5ex>[r]^-{d_2} \ar@{->}[rd]_-{d_1} & D
\ar@<.5ex>[l]^-{\Delta}
\ar@<-.5ex>[r]_-{\Delta}
\ar@{=}[d]^-{} & D(c) \ar@<-.5ex>[l]_-{c_1} \ar@{->}[ld]^-{c_2}\\
& D\ar[dl]_{d}\ar[rd]^{c}\\\cdot&&\cdot}}
\end{equation}

This yields a reflection between the category of directed kites  and the category of spans

\[\xymatrix{\Kite \ar@<.5ex>[r]  &\Span\ar@<.5ex>[l]}\]

A directed kite goes to its direction span, a span goes to the directed kite displayed above. Moreover, the span $(D,d,c)$ is a pregroupoid if and only if its associated directed kite is multiplicative.

\item Consider a morphism between two directed kites as illustrated:
\begin{equation}\label{diag: kite6}
\vcenter{\xymatrix@!0@=4em{
A \ar@<.5ex>[r]^-{f} \ar[rd]_-{\alpha} & B
\ar@<.5ex>[l]^-{r}
\ar@<-.5ex>[r]_-{s}
\ar[d]^-{\beta} & C \ar@<-.5ex>[l]_-{g} \ar[ld]^-{\gamma}
&
A' \ar@<.5ex>[r]^-{f'} \ar[rd]_-{\alpha'} & B'
\ar@<.5ex>[l]^-{r'}
\ar@<-.5ex>[r]_-{s'}
\ar[d]^-{\beta'} & C' \ar@<-.5ex>[l]_-{g'} \ar[ld]^-{\gamma'}
\\
& D\ar[rrr]^{h_D}\ar[dl]_{d}\ar[rd]^{c}
&&& D'\ar[dl]_{d'}\ar[rd]^{c'}
\\
D_0&&D_1
& D'_0&&D'_1
}}
\end{equation}
The morphism $h_D$ is accompanied by additional morphisms
\(
h_0\colon D_0 \to D'_0,\quad
h_1\colon D_1 \to D'_1,\quad
h_A\colon A \to A',\quad
h_B\colon B \to B',\quad
h_C\colon C \to C'
\) 
which together satisfy the necessary compatibility conditions, making the full diagram commute. These data define what we refer to as a morphism of directed kites.

This morphism induces the following diagram:
\begin{equation}\label{diag: kite7}
\vcenter{\xymatrix@!0@=4em{
A \ar@<.5ex>[r]^-{f} \ar[rd]_-{h_D\alpha} & B
\ar@<.5ex>[l]^-{r}
\ar@<-.5ex>[r]_-{s}
\ar[d]^-{h_D\beta} & C \ar@<-.5ex>[l]_-{g} \ar[ld]^-{h_D\gamma}
\\
& D'\ar[dl]_{d'}\ar[rd]^{c'}
\\
D'_0&&D'_1
}}
\end{equation}
which is itself a directed kite.

Moreover, suppose this directed kite admits a unique multiplication, and that both of the original directed kites are multiplicative, with respective multiplication morphisms
\[
m\colon A \times_B C \to D
\quad \text{and} \quad
m'\colon A' \times_{B'} C' \to D'.
\]
Then the following compatibility condition holds
\[
h_D \circ m = m' \circ (h_A \times_{h_B} h_C),
\]
where $h_A \times_{h_B} h_C$ denotes the induced morphism from the pullback $A \times_B C$ to $A' \times_{B'} C'$.

\end{enumerate}

\subsection{The Kite Condition}

The \emph{Kite Condition} is a slight refinement of the notion of a multiplicative directed kite. It will play a central role in the following sections, particularly in the characterization of naturally Mal'tsev categories.

We say that the Kite Condition holds in a category \( \C \) if, for every diagram of the form:
\begin{equation}\label{diag: the kite condition}
\vcenter{\xymatrix@!0@=4em{& E \ar[dd]^-{\beta}
\ar@<.5ex>[dl]^-{p_1}
\ar@<-.5ex>[dr]_-{p_2}
\\A \ar@<.5ex>[ru]^-{e_1} \ar[rd]_-{\alpha} &  & C \ar@<-.5ex>[lu]_-{e_2} \ar[ld]^-{\gamma}\\
& D\ar[dl]_{c}\ar[rd]^{d}\\D_0&&D_1}}
\end{equation}
with:
\begin{enumerate}
\item $p_1e_1=1_A$, $p_2e_2=1_C$;
\item $(e_1p_1)(e_2p_2)=(e_2p_2)(e_1p_1)$
\item $(p_1,p_2)$ is jointly monoic;
\item $\alpha(p_1e_2p_2)=\beta=\gamma(p_2e_1p_1)$
\item $d\alpha p_1=d\alpha(p_1e_2p_2
)$, $c\gamma p_2=c\gamma(p_2e_1p_1)$
\item $d$ and $c$ have kernel pairs and admit the kernel pair construction
\end{enumerate}
there exists a unique morphism $m\colon{E\to D}$ such that
\begin{align*}
me_1=\alpha\\
me_2=\gamma\\
dm=d\gamma p_2\\
cm=c\alpha p_1.
\end{align*}

Note that conditions (1)--(3) above are the same as conditions (1)--(3) in Proposition \ref{prop1}. 

We are now in position to give a characterization on naturally Mal'tsev categories in any category with local products.

\section{Naturally Mal'tsev categories}

From now on we will assume that our ambient category $\C$ has local products (see the end of the introductory section).

In this setting, a span $(D,d,c)$ for which the kernel pairs of $d$ and $c$ exist, will have the kernel pair construction. This means that the last hypoteses on the kite condition can be relaxed to asking that the kernel pairs of $d$ and $c$ exist.

\begin{lemma}\label{lemma 1} Let $\C$ be a category with local products. The following conditions are equivalent:
\begin{enumerate}
\item The forgetful functor $\UMG(\C)\to\RG(\C)$ is an isomorphism.
\item The forgetful functor $\UMG(\C)\to\RG(\C)$ has a section.
\item The Kite Condition holds in $\C$.
\end{enumerate}
\end{lemma}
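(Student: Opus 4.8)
The plan is to prove the chain of implications $(1) \Rightarrow (2) \Rightarrow (3) \Rightarrow (1)$, with the implication $(1) \Rightarrow (2)$ being essentially trivial since an isomorphism of categories is in particular a section (indeed a two-sided inverse) of the forgetful functor. The substantive content lies in $(2) \Rightarrow (3)$ and $(3) \Rightarrow (1)$.

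For $(3) \Rightarrow (1)$, I would start from an arbitrary reflexive graph $(C_1, C_0, d, e, c)$ in $\RG(\C)$ and show that it underlies a unique unital multiplicative graph structure. The key observation is that the local product associated to the diagram $C_1 \xrightarrow{d} C_0 \xleftarrow{c} C_1$ (viewed via the split epi/split mono data, using the reflexivity $e$) produces exactly the object $C_2$ together with $\pi_1, \pi_2, e_1, e_2$ needed in diagram~\eqref{diag: multiplicative graph}. One then feeds the directed kite of the form~\eqref{diag: kite1} --- with the relevant span being $(C_1, d, c)$ itself, $\alpha = \gamma = 1_{C_1}$, $\beta = e$ --- into the Kite Condition: the hypotheses (1)--(6) translate into the reflexive graph axioms (conditions (1)--(3) come from Proposition~\ref{prop1} applied to the local product, (4) is the statement $\alpha(p_1 e_2 p_2) = \beta$ which becomes $p_1 e_2 p_2 = e\, d = e\, c$ unwound correctly, and (5)--(6) are automatic or follow from the reflexive graph identities). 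The Kite Condition then yields a unique $m \colon C_2 \to C_1$ with $m e_1 = 1_{C_1} = m e_2$ (unitality) and $dm = d\pi_2$, $cm = c\pi_1$ (the multiplicative graph compatibilities). Existence and uniqueness of $m$ give that the forgetful functor is bijective on objects; one then checks that it is fully faithful, which follows because a morphism of reflexive graphs automatically commutes with the uniquely-determined multiplications --- this is precisely the content of example~(4) in the list of particular cases, using the uniqueness clause in the Kite Condition applied to the induced kite.

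For $(2) \Rightarrow (3)$, suppose $S \colon \RG(\C) \to \UMG(\C)$ is a section of the forgetful functor. Given a diagram as in~\eqref{diag: the kite condition} satisfying (1)--(6), I would first use conditions (1)--(3) together with Proposition~\ref{prop1} to recognize $E$ as a local product, hence there is an associated split square and in particular a reflexive graph structure --- the relevant one being the "codomain/domain" reflexive graph on a span built from the kite data, or more directly the reflexive graph $K(D,d,c)$ or a pullback thereof. The section $S$ equips this reflexive graph with a canonical unital multiplication, and the morphism $m \colon E \to D$ is then extracted by composing with the projections of the kite; the four required equations $me_1 = \alpha$, $me_2 = \gamma$, $dm = d\gamma p_2$, $cm = c\alpha p_1$ follow from the unitality and multiplicative-graph identities of $S$ applied to the appropriate reflexive graph, using conditions (4) and (5) to match the structural maps. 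Uniqueness of $m$ follows from the joint monicity in condition (3) of the ambient local product together with the naturality/uniqueness built into the section.

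The main obstacle I anticipate is the bookkeeping in $(2) \Rightarrow (3)$: one must identify precisely \emph{which} reflexive graph to apply the section $S$ to, so that the canonically induced multiplication, when transported along the kite's legs $\alpha, \gamma$ and the local-product projections, produces a morphism $E \to D$ satisfying exactly the four stated equations. This requires carefully unwinding the kernel pair construction (conditions (5)--(6) guarantee it exists for $(D,d,c)$) and checking that conditions (4)--(5) of the Kite Condition are exactly the compatibility constraints needed for the induced data to assemble into a morphism of reflexive graphs to which $S$ can be applied naturally. The other implications are comparatively routine once the correct reflexive graph is pinned down.
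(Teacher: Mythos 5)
Your overall architecture (the cycle $(1)\Rightarrow(2)\Rightarrow(3)\Rightarrow(1)$, with $(3)\Rightarrow(1)$ run through the kite of type~\eqref{diag: kite1} and fully faithfulness via example~(4)) matches the paper's, but the substantive implication $(2)\Rightarrow(3)$ is where your proposal has a genuine gap, and you in fact flag it yourself as ``the main obstacle.'' Existence is not just bookkeeping: the paper must choose a specific reflexive graph, namely $(D(c,d),D,\dom,\Delta,\cod)$ obtained from the kernel pair construction applied to the span $(D,d,c)$ \emph{with the roles of $d$ and $c$ interchanged}, apply the section to get $\mu\colon D(c,d)\times_D D(c,d)\to D(c,d)$, and then build the comparison morphism $\theta=\langle\langle\alpha p_1,\alpha p_1,\beta\rangle,\langle\beta,\gamma p_2,\gamma p_2\rangle\rangle\colon E\to D(c,d)\times_D D(c,d)$, setting $m=\midop\circ\mu\circ\theta$; conditions (4)--(5) of the Kite Condition are exactly what makes $\theta$ well defined and makes $m$ satisfy the four equations. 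Your sketch (``compose with the projections of the kite'') does not identify this $\theta$, and without it there is no candidate $m$.

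More seriously, your uniqueness argument is a non sequitur: joint monicity of $(p_1,p_2)$ constrains morphisms \emph{into} $E$, whereas $m$ is a morphism \emph{out of} $E$; what would trivially give uniqueness of $m$ is joint epimorphicity of $(e_1,e_2)$, which is precisely \emph{not} assumed here (that is the weakly Mal'tsev hypothesis, not the naturally Mal'tsev setting). The paper's uniqueness proof is the delicate step: one forms $E(p_1,p_2)$ (which exists because $p_1,p_2$ are split epimorphisms and $\C$ has local products), defines $\delta=\langle\langle e_1p_1,e_1p_1,e_1p_1e_2p_2\rangle,\langle e_1p_1e_2p_2,e_2p_2,e_2p_2\rangle\rangle$, proves $\midop\circ\mu\circ\delta=1_E$ (this is where joint monicity of $(p_1,p_2)$ is actually used, together with $m\circ\mathrm{e}_1=\dom$-type identities for $\mu$), and then exploits the \emph{naturality} of $\mu$ (the section is a functor) along the morphism $m^3\times_m m^3$ induced by any candidate $m$, together with the identity $\theta=(m^3\times_m m^3)\delta$, to force $m=\midop\circ\mu\circ\theta$. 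Your phrase ``naturality/uniqueness built into the section'' gestures at this but does not supply the retraction $\delta$ nor the factorization $\theta=(m^3\times_m m^3)\delta$, which are the actual content; as written, the uniqueness claim does not follow. (Also note the paper proves $(3)\Rightarrow(1)$ by deferring to Theorem~\ref{thm Main}; your direct sketch via~\eqref{diag: kite1} is consistent with how that theorem proceeds, but the hypotheses of the Kite Condition, e.g.\ item (5), still need to be verified against the reflexive graph identities rather than declared automatic.)
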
 
\begin{proof}
Clearly (1) implies (2). Let us prove that (2) implies (3). Take any kite diagram satisfying the prescribed hypoteses. In particular, from the span $(D,d,c)$ we form the reflexive graph $$(D(c,d),D,\dom,\Delta,\cod)$$ and by the section to the forgetful functor $\UMG(\C)\to\RG(\C)$ we find $\mu\colon{D(c,d)\times_D D(c,d)\to D(c,d)}$. Moreover, there exists a morphism $\theta\colon{E\to D(c,d)\times_D D(c,d)}$ defined as $\theta=\langle\langle\alpha p_1,\alpha p_1,\beta\rangle,\langle\beta,\gamma p_2,\gamma p_2\rangle\rangle$. The desired morphism $m\colon{E\to D}$ is obtained as $m=\midop\circ\mu\circ \theta$. The notation $D(c,d)$ as well as the morphisms $\dom,\midop,\cod\colon{D(c,d)\to D}$ and $\Delta:D\to D(c,d)$ are obtained as discussed in the subsection on the kernel pair construction. This proves existence.

In order to prove uniqueness we observe that there exists a morphism $\delta\colon{E\to E(p_1,p_2)\times_E E(p_1,p_2)}$ defined as \begin{align*}
\delta=\langle
\langle e_1p_1,e_1p_1,e_1p_1e_2p_2
\rangle,\langle e_1p_1e_2p_2,e_2p_2,e_2p_2
\rangle\rangle
\end{align*}
and it satisfies $\midop\circ\mu\circ
\delta=1_E$. Indeed, since $(p_1,p_2)$ is jointly monic and we have
\begin{align*}
p_1\circ\midop \circ\mu\circ\delta &=p_1\circ\cod\circ
\mu\circ\delta\\
&=p_1\circ\cod\circ
\pi_1\circ\delta\\
&=p_1\circ\cod\circ\langle e_1p_1,e_1p_1,e_1p_1e_2p_2
\rangle\\
&=p_1e_1p_1=p_1
\end{align*}
and
\begin{align*}
p_2\circ\midop \circ\mu\circ\delta &=p_2\circ\dom\circ
\mu\circ\delta\\
&=p_2\circ\dom\circ
\pi_2\circ\delta\\
&=p_2\circ\dom\circ\langle e_1p_1e_2p_2,e_2p_2,e_2p_2
\rangle\\
&=p_2e_2p_2=p_2
\end{align*}
we may conclude that $\midop\circ\mu\circ
\delta=1_E$.
Now, by the  naturality of $\mu$, as displayed,
\begin{equation}\label{diag: mu is natural}
\vcenter{\xymatrix{
E(p_1,p_2)\times_E E(p_1,p_2) \ar[d]_{m^3\times_m m^3}\ar@<0ex>[r]^-{\mu} & E(p_1,p_2) \ar@<0ex>[r]^-{\midop} \ar@<0ex>[d]^-{m^3} & E \ar@<0ex>[d]^-{m}\\
D(c,d)\times_D D(c,d) \ar[r]^-{\mu} & D(c,d) \ar[r]^-{\midop} & D
}}
\end{equation}
 and observing further that $\theta=(m^3\times_m m^3)\delta$ we obtain

 \begin{align*}
  m &= m1_E=m\circ\midop\circ\mu
  \circ\delta\\
  &=\midop\circ\mu\circ (m^3\times_m m^3)\circ \delta\\
  &=\midop\circ\mu
  \circ\theta. 
 \end{align*}
 This means that $m$ is uniquely determined as $\midop\circ\mu\circ\theta 
$.

It remains to prove that (3) implies (1), which will be shown in Theorem \ref{thm Main}.
\end{proof}

A straightforward observation shows that the unital multiplicative graph needed in the proof of the previous lemma (see the proof of Theorem \ref{thm Main}) is in fact associative, and therefore defines an internal category. This motivates the following result.

\begin{proposition} Let $\C$ be a category with local products. The following conditions are equivalent:
\begin{enumerate}
\item The forgetful functor $\Cat(\C)\to\RG(\C)$ is an isomorphism.
\item The forgetful functor $\Cat(\C)\to\RG(\C)$ has a section.
\item The Kite Condition holds in $\C$.
\end{enumerate}
\end{proposition}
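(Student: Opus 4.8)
The plan is to leverage Lemma~\ref{lemma 1} together with the parenthetical remark immediately preceding this proposition: the unital multiplicative graph produced in the proof of Theorem~\ref{thm Main} (the one witnessing that the Kite Condition implies that $\UMG(\C)\to\RG(\C)$ is an isomorphism) is in fact associative. Granting this, the equivalence of the three conditions for $\Cat(\C)$ follows the same three-step cycle as in Lemma~\ref{lemma 1}. First, (1) $\Rightarrow$ (2) is trivial: an isomorphism is in particular a functor with a section (its inverse). Second, for (2) $\Rightarrow$ (3), I would argue that a section of $\Cat(\C)\to\RG(\C)$ composes with the forgetful functor $\Cat(\C)\to\UMG(\C)$ to yield a section of $\UMG(\C)\to\RG(\C)$, since the triangle $\Cat(\C)\to\UMG(\C)\to\RG(\C)$ commutes (both composites are the forgetful functor on reflexive graphs); then Lemma~\ref{lemma 1}, specifically the implication (2) $\Rightarrow$ (3) already established there, gives the Kite Condition. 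Third, for (3) $\Rightarrow$ (1), I would invoke Lemma~\ref{lemma 1} to get that the Kite Condition makes $\UMG(\C)\to\RG(\C)$ an isomorphism, and then upgrade this to $\Cat(\C)\to\RG(\C)$ using the associativity remark.

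The substantive content is entirely in the third step, and in particular in the claim that the canonical multiplication $m$ constructed on an arbitrary reflexive graph (via $m = \midop\circ\mu\circ\theta$ applied to the relevant kite, as in the proof of Lemma~\ref{lemma 1}) is associative. Concretely, I would take an arbitrary reflexive graph $(C_1,C_0,d,e,c)$ in $\RG(\C)$, form the associated kite~\eqref{diag: kite1}, and obtain its unique multiplication $m\colon C_2\to C_1$; I then need to verify $m\circ(1_{C_1}\times_{C_0} m) = m\circ(m\times_{C_0} 1_{C_1})$ on $C_3 = C_1\times_{C_0}C_1\times_{C_0}C_1$. The cleanest route is to exhibit both composites as induced by a uniqueness clause: construct an auxiliary kite on $C_3$ whose unique multiplication both $m\circ(1\times m)$ and $m\circ(m\times 1)$ must equal. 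This is exactly the standard argument that, in a setting where reflexive graphs carry canonical multiplications, those multiplications are automatically associative — the ``naturally Mal'tsev implies the canonical multiplication is associative'' phenomenon — carried out here with the kite machinery. Since the proposition explicitly defers to the associativity remark following Lemma~\ref{lemma 1} and to Theorem~\ref{thm Main}, in the written proof I would simply cite that remark rather than reproducing the calculation.

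I should also record that, once (3) $\Rightarrow$ (1) is known for $\Cat(\C)$, the functor $\Cat(\C)\to\RG(\C)$ being an isomorphism is consistent with $\UMG(\C)\to\RG(\C)$ being an isomorphism: the inclusion $\Cat(\C)\hookrightarrow\UMG(\C)$ over $\RG(\C)$ is then forced to be an isomorphism as well, so under the Kite Condition every unital multiplicative graph in $\C$ is automatically an internal category. I would state the proof compactly:

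\begin{proof}
That (1) implies (2) is immediate. For (2) $\Rightarrow$ (3): composing a section of $\Cat(\C)\to\RG(\C)$ with the forgetful functor $\Cat(\C)\to\UMG(\C)$ yields a section of $\UMG(\C)\to\RG(\C)$, since both functors lie over $\RG(\C)$; the Kite Condition then follows from the implication (2) $\Rightarrow$ (3) of Lemma~\ref{lemma 1}. For (3) $\Rightarrow$ (1): by Lemma~\ref{lemma 1}, the Kite Condition makes $\UMG(\C)\to\RG(\C)$ an isomorphism, so every reflexive graph carries a unique compatible unital multiplication $m$, obtained as in the proof of that lemma. By the observation preceding this proposition (see the proof of Theorem~\ref{thm Main}), this multiplication is associative, hence the reflexive graph underlies an internal category, and this internal category structure is unique because $m$ already is. Thus $\Cat(\C)\to\RG(\C)$ is an isomorphism.
\end{proof}

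The main obstacle, as noted, is the verification that the canonical multiplication is associative; everything else is formal bookkeeping about functors lying over $\RG(\C)$. Since that verification is attributed to the remark after Lemma~\ref{lemma 1} and to Theorem~\ref{thm Main}, the present proof is legitimately short, but a referee might reasonably ask for the associativity argument to be spelled out at least once in the paper — presumably it appears in the proof of Theorem~\ref{thm Main}, where the auxiliary kite on $C_3$ and its uniqueness clause do the work.
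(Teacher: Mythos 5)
Your proposal is correct and follows essentially the same route as the paper: (1)$\Rightarrow$(2) trivially, (2)$\Rightarrow$(3) by composing a section with the forgetful functor $\Cat(\C)\to\UMG(\C)$ and invoking Lemma~\ref{lemma 1}, and (3)$\Rightarrow$(1) by the associativity argument you sketch, which is exactly the paper's: $m\circ(1\times m)$ and $m\circ(m\times 1)$ both serve as multiplications for the directed kite~\eqref{diag: kite2}, hence coincide by uniqueness. The only cosmetic difference is that the paper spells out this kite~\eqref{diag: kite2} observation inside the proof itself, whereas your compact write-up defers it to the motivating remark (which is what the proposition is meant to justify), so you should state that one-line argument explicitly rather than cite the remark.
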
 
\begin{proof}
It is clear that (1) implies (2), and that (2) implies the existence of a section for the forgetful functor \( \UMG(\C) \to \RG(\C) \). By the previous lemma, this yields the Kite Condition. 

To complete the proof, it remains to show that the unital multiplicative graph needed in the proof of the previous lemma is associative. It suffices to observe that both morphisms \( 1 \times m \) and \( m \times 1 \) serve as multiplications for the directed kite displayed in~\eqref{diag: kite2}, and therefore must be equal.
\end{proof}

A further observation shows that the internal category obtained in the previous proposition is, in fact, an internal groupoid. This motivates the following result.

\begin{proposition} Let $\C$ be a category with local products. The following conditions are equivalent:
\begin{enumerate}
\item The forgetful functor $\Grpd(\C)\to\RG(\C)$ is an isomorphism.
\item The forgetful functor $\Grpd(\C)\to\RG(\C)$ has a section.
\item The Kite Condition holds in $\C$.
\end{enumerate}
\end{proposition}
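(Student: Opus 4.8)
The plan is to follow the same pattern already established in the previous two propositions, extending the chain one step further. The implication $(1)\Rightarrow(2)$ is trivial. For $(2)\Rightarrow(3)$: a section of the forgetful functor $\Grpd(\C)\to\RG(\C)$ composes with the forgetful functor $\Grpd(\C)\to\Cat(\C)$ to yield, after forgetting inverses, a section of $\Cat(\C)\to\RG(\C)$ (equivalently of $\UMG(\C)\to\RG(\C)$); by Lemma~\ref{lemma 1} this already gives the Kite Condition. So the only real content is $(3)\Rightarrow(1)$, and by the previous proposition we already know that the Kite Condition forces $\Cat(\C)\to\RG(\C)$ to be an isomorphism; what remains is to show that the internal category canonically attached to each reflexive graph is in fact an internal groupoid.

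First I would recall from the definition in the excerpt that a groupoid is an associative unital multiplicative graph for which the additional square
\[
\vcenter{\xymatrix{C_2 \ar[r]^{\pi_2}\ar[d]_{\pi_1} & C_1 \ar[d]^{d}\\
 C_1\ar[r]^{d} & C_0}}
\]
is a pullback. Since the internal category structure on a reflexive graph $(C_1,C_0,d,e,c)$ is obtained (as in the proof of Theorem~\ref{thm Main}, invoked in Lemma~\ref{lemma 1}) from the local product $C_1\times_{C_0}C_1$ built along the split epimorphisms $d$ and $c$, the underlying span of $C_2$ is already the pullback of $d$ along $c$. The task is therefore to produce the ``other'' pullback square, i.e.\ to exhibit inverses. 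The natural way to do this is to use the Kite Condition once more: I would construct the directed kite whose multiplicative structure is forced to be $\langle m\pi_1,\pi_2\rangle$ or $\langle\pi_1, m\pi_2\rangle$ — mirroring diagram~\eqref{diag: kite3} — and argue that uniqueness of its multiplication, guaranteed by the Kite Condition exactly as in example~(3) of the list of particular cases (where it is noted, citing \cite{MF3}, that the kite~\eqref{diag: kite3} has a unique multiplication iff the internal category is a groupoid), forces the existence of the connecting map that witnesses the pullback property of the $(\pi_1,\pi_2,d,d)$ square. Concretely, this connecting map is the ``division'' morphism $C_1\times_{C_0}C_1\to C_1$ sending (informally) a pair of arrows with common domain to the composite of one with the inverse of the other, and its existence and compatibility with the structure maps is precisely what the Kite Condition delivers when applied to the appropriate kite over the span $(C_1,d,c)$.

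The main obstacle I anticipate is bookkeeping rather than conceptual: one must set up the correct kite diagram — with legs built from $\pi_1$, $\pi_2$, $m$ and the already-constructed internal category structure — verify that it satisfies hypotheses (1)--(6) of the Kite Condition (in particular the jointly-monic condition and the domain/codomain compatibilities (4)--(5), which is where the local-product construction of $C_2$ and the unit/associativity laws get used), and then identify the unique $m$ it produces with the map that makes the second square a pullback. Once that identification is in place, the internal category is a groupoid, hence $\Grpd(\C)\to\RG(\C)$ is essentially surjective on structures; full faithfulness and bijectivity on objects follow as in Lemma~\ref{lemma 1} since groupoid morphisms are just reflexive-graph morphisms between groupoids (the compatibility with $m$ and with inverses being automatic, the former by example~(4)/(6) of the particular cases and the latter by uniqueness of inverses in a groupoid). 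I would close by remarking that this completes the promised circle of equivalences and, combined with Lemma~\ref{lemma 1} and the preceding proposition, shows that over a category with local products the Kite Condition is simultaneously equivalent to $\UMG$, $\Cat$, and $\Grpd$ being isomorphic to $\RG$.
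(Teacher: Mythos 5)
Your proposal is correct and follows essentially the same route as the paper: (1)$\Rightarrow$(2) trivially, (2) gives a section of $\UMG(\C)\to\RG(\C)$ and hence the Kite Condition via Lemma~\ref{lemma 1}, and (3)$\Rightarrow$(1) is obtained by noting that the internal category already produced from the previous proposition is a groupoid because the directed kite of diagram~\eqref{diag: kite3} admits a unique multiplication under the Kite Condition, exactly as in the paper (which likewise appeals to the particular case citing \cite{MF3}). Your extra remarks on verifying the kite hypotheses and on morphisms are just an expanded version of what the paper leaves implicit.
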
 
\begin{proof}
It is clear that (1) implies (2), and that (2) implies the existence of a section for the forgetful functor \( \UMG(\C) \to \RG(\C) \). By the previous lemma, this yields the Kite Condition.

To complete the proof, it remains to show that the internal category constructed in the previous proposition is an internal groupoid. This follows from the fact that the directed kite displayed in~\eqref{diag: kite3} admits a unique multiplication, by the kite condition, and thus determines an internal groupoid.

\end{proof}


The condition that the forgetful functor from internal groupoids to reflexive graphs is an isomorphism is known as the \emph{Lawvere Condition}, and it is customary to use it in the definition of a naturally Mal'tsev category. The following characterization of naturally Mal'tsev categories strengthens existing results in the literature, in that it holds in any category with local products, without requiring the existence of finite limits. Moreover, the Kite Condition presented here refines the compatibility condition between spans and split squares considered in~\cite{MF38}.

\begin{theorem}\label{thm Naturally} Let $\C$ be a category with local products. The following conditions are equivalent:
\begin{enumerate}
\item The forgetful functor $\Grpd(\C)\to\RG(\C)$ is an isomorphism.
\item The forgetful functor $\Grpd(\C)\to\RG(\C)$ has a section.
\item The forgetful functor $\Cat(\C)\to\RG(\C)$ is an isomorphism.
\item The forgetful functor $\Cat(\C)\to\RG(\C)$ has a section.
\item The Kite Condition holds in $\C$.
\end{enumerate}
Moreover, when $\C$ has binary products and a terminal object, the above conditions are further equivalent to the existence of a natural transformation
\begin{align*}
p_D\colon{D\times D\times D\to D},
\end{align*}
with $D$ in $\C$, such that for every two morphisms $x,y\colon{Z\to D}$, \begin{align*}
p\langle x,y,y\rangle=x ,\quad p\langle x,x,y\rangle=y.
\end{align*}

\end{theorem}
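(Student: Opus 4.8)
The core equivalence (1)--(5) has already been reduced, via Lemma~\ref{lemma 1} and the two subsequent propositions, to the one remaining implication ``the Kite Condition implies the forgetful functor $\Grpd(\C)\to\RG(\C)$ is an isomorphism'', which the excerpt defers to Theorem~\ref{thm Main}. So the plan is to take that equivalence as established and concentrate on the ``Moreover'' clause: under the extra hypothesis that $\C$ has binary products and a terminal object, the Kite Condition is equivalent to the existence of a natural Mal'tsev operation $p_D\colon D\times D\times D\to D$ satisfying $p\langle x,y,y\rangle = x$ and $p\langle x,x,y\rangle = y$.

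\textbf{From the Kite Condition to a natural Mal'tsev operation.} Assuming the Kite Condition (equivalently, by the propositions above, that every reflexive graph underlies a unique internal groupoid), I would fix an object $D$ and apply the structure to the ``indiscrete'' reflexive graph on $D$, namely $(D\times D, D, \pi_1, \Delta, \pi_2)$ where $\Delta = \langle 1_D, 1_D\rangle$. Its object of composable pairs is $D\times D\times D$ (with the first and third coordinates the outer vertices and the middle coordinate shared), and the induced multiplication $m\colon D\times D\times D \to D\times D$ must have the form $m = \langle \pi_1, \pi_3\rangle$ by the domain/codomain constraints $dm = d\gamma\pi_2$, $cm = c\alpha\pi_1$ spelled out for this graph; composing with either projection recovers a candidate operation. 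Actually the cleanest route: the unique internal-groupoid structure on the indiscrete graph supplies division/subtraction morphisms, and one extracts $p_D$ directly as the composite expressing ``$x$ followed by the inverse of $y$ followed by $y$'' — concretely $p_D = \pi_1 \circ m_{\text{grpd}}$-type composite. The identities $p\langle x,y,y\rangle=x$ and $p\langle x,x,y\rangle=y$ are then exactly the unitality conditions $me_1 = \alpha$, $me_2 = \gamma$ read off on the indiscrete graph. Naturality in $D$ follows from the uniqueness half of the Kite Condition: any morphism $D\to D'$ induces a morphism of the corresponding indiscrete reflexive graphs, hence (by item~(6) of the particular cases, or directly by uniqueness) commutes with the induced multiplications, which is precisely naturality of $p$.

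\textbf{From a natural Mal'tsev operation back to the Kite Condition.} Conversely, given such a natural $p_D$, I would verify the Kite Condition by hand. Given a kite diagram \eqref{diag: the kite condition} satisfying hypotheses (1)--(6), the pair $(p_1,p_2)$ is jointly monic, so a morphism $m\colon E\to D$ with $p_1$-- and $p_2$--components prescribed is unique if it exists; but here $m$ maps into $D$, not into a product, so I instead use joint monicity of $(p_1, p_2)$ only to pin down $E$ as a subobject of $A\times C$ and define $m$ on $E$ by restricting $p_D$ applied to the data $\alpha, \beta, \gamma$. The natural guess is
\[
m = p_D \circ \langle \alpha p_1,\; \beta,\; \gamma p_2\rangle,
\]
where $\beta$ here means $\beta\colon E\to D$ composed appropriately; one checks $me_1 = p_D\langle \alpha, \alpha p_1 e_1 \cdots\rangle$ — using $\beta = \alpha(p_1e_2p_2) = \gamma(p_2e_1p_1)$ from hypothesis (4) and $p_1e_1 = 1_A$, $p_2e_2 = 1_C$ — collapses to $p_D\langle \alpha, \gamma(p_2e_1), \gamma(p_2e_1)\rangle = \alpha$ after the middle and last arguments coincide, via the identity $p\langle x,y,y\rangle = x$; symmetrically $me_2 = \gamma$ from $p\langle x,x,y\rangle = y$. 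The domain/codomain conditions $dm = d\gamma p_2$ and $cm = c\alpha p_1$ follow from hypothesis (5) together with naturality of $p$ applied to $d\colon D\to D_1$ and $c\colon D\to D_0$ (so that $d$ and $c$ commute past $p$), reducing both sides to the same expression once the compatibility $d\alpha p_1 = d\alpha(p_1e_2p_2) = d\beta$ etc.\ is invoked. Uniqueness of $m$ is delicate because $(p_1,p_2)$ is jointly monic but $m$ does not land in a span of $p_1,p_2$-type; I would instead argue uniqueness through the intrinsic characterization of local products (Proposition~\ref{prop1}): the Kite Condition's $m$ is forced by the fact established in the proof of Lemma~\ref{lemma 1}, i.e.\ $m = \midop\circ\mu\circ\theta$, and $\mu$ on $(D(c,d), D, \dom,\Delta,\cod)$ is itself forced by $p_D$ via naturality, so there is no freedom left.

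\textbf{Expected main obstacle.} The routine direction (natural operation $\Rightarrow$ Kite Condition existence part) is a calculation; the genuine difficulty is the \emph{uniqueness} clause and the backward passage, because the Kite Condition is stated without product assumptions whereas the Mal'tsev operation presupposes them — so the two formulations live in slightly different ambient settings, and the argument must route the uniqueness of $m$ through the kernel-pair construction (as in Lemma~\ref{lemma 1}) rather than through a naive joint-monicity argument on $(p_1,p_2)$, which does not directly apply since $m$ is not a morphism into a pullback of $p_1$ and $p_2$. Establishing that the internal groupoid one builds on the indiscrete graph $(D\times D, D, \pi_1, \Delta, \pi_2)$ has multiplication of the predicted shape $\langle\pi_1,\pi_3\rangle$ — forced purely by the domain/codomain equations — is the linchpin that makes both directions line up, and verifying the groupoid (inverse) axioms reduce to the two stated identities for $p$ is where I would spend the most care.
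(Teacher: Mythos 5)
Your reduction of (1)--(5) to Lemma~\ref{lemma 1}, the two propositions and Theorem~\ref{thm Main} matches the paper, and your backward direction of the ``moreover'' clause (defining $m=p_D\circ\langle\alpha p_1,\beta,\gamma p_2\rangle$ and checking the four equations via naturality of $p$ and hypotheses (4)--(5)) is essentially correct; even the uniqueness worry can be settled cleanly by showing $p_E\langle e_1p_1,\,e_1p_1e_2p_2,\,e_2p_2\rangle=1_E$ (use joint monicity of $(p_1,p_2)$, naturality of $p$ with respect to $p_1,p_2$, and the commutation $(e_1p_1)(e_2p_2)=(e_2p_2)(e_1p_1)$), after which naturality of $p$ with respect to any candidate $m$ forces $m=p_D\langle\alpha p_1,\beta,\gamma p_2\rangle$ --- the analogue of the $\delta$-argument in Lemma~\ref{lemma 1}.

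The genuine gap is in the forward direction. You propose to extract $p_D$ from the unique groupoid structure on the indiscrete reflexive graph $(D\times D,\,D,\,\pi_1,\,\Delta,\,\pi_2)$, but that graph carries a groupoid structure in \emph{any} category with binary products: since $(\pi_1,\pi_2)$ is jointly monic, the multiplication on its object of composable pairs $D\times D\times D$ is forced to be $\langle\pi_1,\pi_3\rangle$ and the inverse is the twist, so its existence imposes no condition on $\C$ and composing with a projection yields only $\pi_1$ or $\pi_3$, never a Mal'tsev operation; the unitality equations $me_1=\alpha$, $me_2=\gamma$ read on that graph say $m((x,y),(y,y))=(x,y)$, not $p\langle x,y,y\rangle=x$. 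The correct source of $p_D$ is the Kite Condition applied to a kite whose apex span is \emph{not} a relation, namely the kite of diagram~\eqref{diag: kite5} for the span $(D,!_D,!_D)$ into the terminal object: there $A=C=D\times D$, $B=D$, $E\cong D\times D\times D$, $\alpha=\pi_1$, $\gamma=\pi_2$, $\beta=1_D$, the compatibility conditions are trivially satisfied, and the unique multiplication $m\colon D^3\to D$ satisfies exactly $m\langle x,y,y\rangle=x$ and $m\langle y,y,z\rangle=z$; naturality in $D$ then follows from uniqueness via the induced kite morphisms as in diagram~\eqref{diag: kite7}. This is precisely the point where the naturally Mal'tsev case requires the condition for arbitrary spans (the class $\M_0$) rather than for relations, so no argument confined to the indiscrete equivalence relation can succeed.
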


A classical example of a naturally Mal'tsev category is the category of abelian groups. We also present a lesser-known example: a subcategory of  commutative cancellative medial magmas, as discussed in \cite{MF26}.

Recall that a \emph{ commutative cancellative medial magma} is an algebraic structure $(A, \cdot)$ consisting of a set $A$ equipped with a binary operation $\cdot$ satisfying the following three conditions:
\begin{align*}
x \cdot y &= y \cdot x, \quad &&\text{(commutativity)} \\
(x \cdot y) \cdot (z \cdot w) &= (x \cdot z) \cdot (y \cdot w), \quad &&\text{(mediality)}
\end{align*}
for all $x, y, z, w \in A$, together with the cancellation condition:
\begin{align*}
\forall x, y \in A,\, \big(\exists b \in A,\, x \cdot b = y \cdot b\big) \Rightarrow x = y.
\end{align*}

\begin{proposition}
Let $\C$ be a full subcategory of the category of commutative cancellative medial magmas that is closed under finite limits. Then $\C$ is a naturally Mal'tsev category if and only if, for every object $(D, \cdot)$ in $\C$ and all elements $a, b, c \in D$, the equation
\[
x \cdot b = a \cdot c
\]
admits a (unique) solution $x \in D$.
\end{proposition}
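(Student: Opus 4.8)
The plan is to apply Theorem~\ref{thm Naturally}: since $\C$ is closed under finite limits inside the category of commutative cancellative medial magmas, in particular it has binary products and a terminal object, so $\C$ is naturally Mal'tsev if and only if there is a natural Mal'tsev operation $p_D\colon D\times D\times D\to D$ satisfying $p\langle x,y,y\rangle = x$ and $p\langle x,x,y\rangle = y$. The strategy is therefore to show that the existence of such a natural operation is equivalent to the stated solvability-and-uniqueness of $x\cdot b = a\cdot c$, and to do this by first identifying what natural operations $D\times D\times D\to D$ can look like on a cancellative medial magma.

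First I would establish the \emph{one direction}: if each equation $x\cdot b = a\cdot c$ has a (necessarily unique, by cancellation) solution, write $x = a\cdot c / b$ for this solution and define $p(a,b,c) = a\cdot c / b$. One checks $p\langle x,y,y\rangle\cdot y = x\cdot y$, hence $p\langle x,y,y\rangle = x$ by cancellation; and $p\langle x,x,y\rangle\cdot x = x\cdot y = y\cdot x$, hence $p\langle x,x,y\rangle = y$. Naturality in $D$ is the point that requires care: a homomorphism $\phi\colon D\to D'$ sends the solution of $x\cdot b = a\cdot c$ in $D$ to a solution of $x\cdot\phi(b) = \phi(a)\cdot\phi(c)$ in $D'$, which by uniqueness in $D'$ must be $p'(\phi a,\phi b,\phi c)$; this gives $\phi\circ p_D = p_{D'}\circ(\phi\times\phi\times\phi)$. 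Here the closure of $\C$ under finite limits guarantees $D\times D\times D$ is again in $\C$ and that $p_D$ is a morphism of $\C$ (one should check $p_D$ is a magma homomorphism, which follows from mediality: $(a_1\cdot c_1/b_1)\cdot(a_2\cdot c_2/b_2)$ solves $x\cdot(b_1\cdot b_2) = (a_1\cdot a_2)\cdot(c_1\cdot c_2)$ by a medial rearrangement, so it equals $(a_1 a_2)(c_1 c_2)/(b_1 b_2)$).

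For the \emph{converse}, suppose $\C$ is naturally Mal'tsev, so a natural $p_D$ as above exists. The key is to use naturality against the free object (or a suitably chosen object) to pin down the value of $p_D$ and then read off that $p_D(a,b,c)$ must solve $x\cdot b = a\cdot c$. Concretely, for fixed $a,b,c\in D$ consider the homomorphism from the free commutative cancellative medial magma (or its intersection with $\C$, using finite-limit closure) on three generators $u,v,w$ sending $u\mapsto a$, $v\mapsto b$, $w\mapsto c$; naturality forces $p_D(a,b,c)$ to be the image of $p(u,v,w)$. Using the identities $p\langle u,v,v\rangle = u$ and $p\langle u,u,v\rangle = v$ together with mediality, one derives $p(u,v,w)\cdot v = u\cdot w$ in the free structure — this is the computational heart and the main obstacle: showing that the two defining identities of a Mal'tsev operation, combined with commutativity and mediality, force $p(u,v,w)\cdot v = u\cdot w$ identically. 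A clean way is to verify this on enough test homomorphisms, e.g. into $\mathbb{Z}$ with $x\cdot y = (x+y)/2$-type operations or directly in an abelian group written multiplicatively where $x\cdot y = xy$, noting that in such models the only Mal'tsev operation is $p(a,b,c) = ac b^{-1}$, and then invoking that these models jointly separate elements of the free commutative cancellative medial magma. Once $p_D(a,b,c)\cdot b = a\cdot c$ is known, existence of a solution to $x\cdot b = a\cdot c$ follows, and uniqueness is immediate from cancellation.

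The main obstacle, as indicated, is the purely equational lemma that in the variety-like setting of commutative cancellative medial magmas, a binary-plus-two-sided Mal'tsev operation is forced to satisfy $p(a,b,c)\cdot b = a\cdot c$; everything else (definition of the division operation, naturality, the homomorphism property of $p$ via mediality, and the appeal to Theorem~\ref{thm Naturally}) is routine. I would handle that lemma either by the free-object/separating-family argument above, or, if one prefers to stay elementary, by directly manipulating $p\langle x,y,y\rangle = x$, $p\langle x,x,y\rangle = y$ using mediality to substitute and cancel — for instance applying $p$ to medially rearranged triples and comparing — which should yield the required identity after a short chain of cancellations.
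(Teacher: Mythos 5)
Your direction ``solvability $\Rightarrow$ naturally Mal'tsev'' is essentially the paper's argument (define $p(a,b,c)$ as the unique solution, obtain the Mal'tsev identities from cancellation, naturality from uniqueness of solutions, and the homomorphism property from mediality), and your appeal to Theorem~\ref{thm Naturally} is exactly how the paper reduces the statement to the existence of a natural Mal'tsev operation. The gap is in the converse. The ``purely equational lemma'' you isolate as the computational heart is not the right statement and, as formulated, is false: for an arbitrary ternary operation, the identities $p(x,y,y)=x$ and $p(x,x,y)=y$ together with commutativity, mediality and cancellativity of $D$ do \emph{not} force $p(a,b,c)\cdot b=a\cdot c$. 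For instance, on $(\mathbb{Z},+)$ the operation $p(a,b,c)=a-b+c+(a-b)(b-c)$ satisfies both Mal'tsev identities but not $p(a,b,c)+b=a+c$. What actually forces the identity is the fact that the component $p_D$ is a \emph{morphism of} $\C$, hence a magma homomorphism $D\times D\times D\to D$ (with componentwise operation, available since $\C$ is closed under finite limits). With that, the paper's proof of this direction is a short direct computation, needing only commutativity and the Mal'tsev identities—no free objects, no mediality, not even naturality:
\[
p(a,b,c)\cdot b \;=\; p(a,b,c)\cdot p(b,b,b) \;=\; p(a\cdot b,\, b\cdot b,\, c\cdot b) \;=\; p(a\cdot b,\, b\cdot b,\, b\cdot c) \;=\; p(a,b,b)\cdot p(b,b,c) \;=\; a\cdot c,
\]
after which cancellation gives uniqueness of the solution.

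Your proposed repair via free objects does not close the gap either: $\C$ is only assumed to be a \emph{full subcategory closed under finite limits}, so the free commutative cancellative medial magma on three generators need not be an object of $\C$ (take, e.g., $\C$ the abelian groups regarded as magmas), and naturality of $p$ can only be invoked against morphisms of $\C$; ``its intersection with $\C$'' is not a meaningful substitute, since no component of $p$ is defined on an object outside $\C$. Moreover, the separating-family claim (that abelian-group-type models jointly separate points of the free commutative cancellative medial magma) is itself a nontrivial embedding statement left unproven. So the converse direction of your proposal should be replaced by the direct homomorphism computation above.
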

\begin{proof}
Since $\C$ has finite limits, we may use the characterization that $\C$ is naturally Mal'tsev if and only if each object $D$ is equipped with a natural Mal'tsev morphism. 

Suppose such a morphism $p \colon D \times D \times D \to D$ exists. Then, given any $a, b, c \in D$, define $x := p(a, b, c)$. We claim that this $x$ is a solution to the equation $x \cdot b = a \cdot c$. Indeed,
\begin{align*}
x \cdot b &= p(a, b, c) \cdot p(b, b, b) \\
&= p(a \cdot b, b \cdot b, c \cdot b) && \text{(since $p$ is a morphism)} \\
&= p(a \cdot b, b \cdot b, b \cdot c) && \text{(commutativity)} \\
&= p(a, b, b) \cdot p(b, b, c) && \text{(since $p$ is Mal'tsev)} \\
&= a \cdot c.
\end{align*}
 The cancellation property ensures that this solution is unique.

Conversely, suppose that for every $a, b, c \in D$ the equation $x \cdot b = a \cdot c$ admits a unique solution $x \in D$. Then we define
\[
p(a, b, c) := \text{the unique $x$ such that } x \cdot b = a \cdot c.
\]
Clearly, $p(a, b, b) = a$, since $a \cdot b = a \cdot b$, and $p(b, b, c) = c$, since $c \cdot b = b \cdot c$ by commutativity.

Let $f \colon D \to D'$ be a morphism in $\C$. We verify that $p$ is natural. Observe:
\begin{align*}
f(p(a, b, c)) \cdot f(b) &= f(p(a, b, c) \cdot b) \\
&= f(a \cdot c) \\
&= f(a) \cdot f(c),
\end{align*}
so by uniqueness of the solution to the equation $x \cdot f(b) = f(a) \cdot f(c)$, we conclude:
\[
p(f(a), f(b), f(c)) = f(p(a, b, c)).
\]
Hence, $p$ is a natural transformation.

It remains to show that $p$ is a morphism of magmas. Using the medial law, we compute:
\begin{align*}
(p(a, b, c) \cdot p(a', b', c')) \cdot (b \cdot b') 
&= (p(a, b, c) \cdot b) \cdot (p(a', b', c') \cdot b') \\
&= (a \cdot c) \cdot (a' \cdot c') \\
&= (a \cdot a') \cdot (c \cdot c').
\end{align*}
Thus, by uniqueness of the solution to the equation $x \cdot (b \cdot b') = (a \cdot a') \cdot (c \cdot c')$, we conclude:
\[
p(a, b, c) \cdot p(a', b', c') = p(a \cdot a', b \cdot b', c \cdot c'),
\]
which shows that $p$ is indeed a morphism.
\end{proof}

\section{Weakly Mal'tsev objects}

Weakly Mal’tsev categories \cite{MF3} are characterized by minimal structural requirements: the existence of pullbacks of split epimorphisms along split epimorphisms, together with the condition that, in any such pullback square, the canonical cospan into the pullback is jointly epimorphic. These axioms define a broad and flexible class of categories that properly contains all Mal’tsev categories, while also encompassing examples that lie strictly beyond the Mal’tsev framework—such as commutative magmas with cancellation, distributive lattices, preordered groups, or the dual of the category of topological spaces.

The notion of a weakly Mal'tsev category motivates the following definition of \emph{weakly Mal'tsev object}, which can be formulated in any category with local products. It will be clear that a category is weakly Mal'tsev if and only if every one of its objects is a weakly Mal'tsev object.

\begin{definition}
Let $\C$ be a category with local products. An object $D$ in $\C$ is called a \emph{weakly Mal'tsev object} if, for every kite diagram over $D$,
\begin{equation}\label{kite}
\vcenter{\xymatrix@!0@=4em{
A \ar@<.5ex>[r]^-{f} \ar[rd]_-{\alpha} & B
\ar@<.5ex>[l]^-{r}
\ar@<-.5ex>[r]_-{s}
\ar[d]^-{\beta} & C \ar@<-.5ex>[l]_-{g} \ar[ld]^-{\gamma} \\
& D
}}
\begin{array}{c}
fr=1_{B}=gs,\\
\alpha r=\beta=\gamma s,
\end{array}
\end{equation}
there exists \textbf{at most one morphism} $\varphi\colon A\times_B C \to D$ such that $\varphi e_1 = \alpha$ and $\varphi e_2 = \gamma$. When such a morphism $\varphi$ exists, the diagram is said to be \emph{admissible}, and $\varphi$ is called the \emph{admissibility morphism} of the kite.
\end{definition}

For any category \( \C \) with local products, we denote by \( \C^{*} \) the full subcategory of \( \C \) consisting of its weakly Mal'tsev objects.

This leads to the following problem: given a category \( \C \) with local products, determine its subcategory \( \C^{*} \). We illustrate this with a few examples:

\begin{enumerate}
\item \( \mathbf{Set}^{*} = \mathbf{2} \),
\item \( \mathbf{Lat}^{*} = \mathbf{DLat} \),
\item \( \mathbf{CSmg}^{*} = \mathbf{CCSmg} \),
\end{enumerate}
where \( \mathbf{2} \) denotes the category generated by a single arrow \( (0 \to 1) \). The result that a lattice is a weakly Mal'tsev object if and only if it is distributive appears in \cite{MF9}, while the characterization of commutative cancellative semigroups as the weakly Mal'tsev objects in the category of commutative semigroups is established in the preprint \cite{MF9a}.

The following example  shows that associativity plays no role in the weakly Mal'tsev property, since the same result that holds for commutative semigroups extends to commutative magmas.

\begin{proposition} For a  commutative magma $(D,\cdot)$, in the category of  commutative magmas, the following conditions are equivalent:
\begin{enumerate}
\item it is a weakly Mal'tsev object;
\item it admits cancellation, i.e., for every $x,y,b\in D$, if $x\cdot b=y\cdot b$ then $x=y$;
\item given any $a,b,c\in D$, the equation $x\cdot b=a\cdot c$ has at most one solution $x\in D$. 
\end{enumerate}

\end{proposition}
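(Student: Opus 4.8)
The plan is to prove the cycle of implications $(1)\Rightarrow(3)\Rightarrow(2)\Rightarrow(1)$, with the last implication carrying the real content. First I would record what a kite diagram over a commutative magma $D$ amounts to concretely. In the category of commutative magmas all the relevant local products exist, being ordinary pullbacks of surjections (or just of split epimorphisms) computed at the level of underlying sets; so $A\times_B C$ has underlying set $\{(a,c)\in A\times C : f(a)=g(c)\}$ with pointwise operation, and $e_1(a)=(a,sf(a))$, $e_2(c)=(rg(c),c)$. A morphism $\varphi\colon A\times_B C\to D$ with $\varphi e_1=\alpha$, $\varphi e_2=\gamma$ is thus forced, \emph{if it exists}, to satisfy $\varphi(a,sf(a))=\alpha(a)$ and $\varphi(rg(c),c)=\gamma(c)$, and being a magma morphism it must send $(a,c)=(a,sf(a))\cdot(rg\,\beta(a) \text{-type element})$—more precisely, writing $b=f(a)=g(c)$ we have $(a,c)=(a,sb)\cdot(\text{something})$ only after we exhibit $(a,c)$ as a product of elements in the images of $e_1$ and $e_2$. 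The cleanest route is: $(1)\Rightarrow(3)$ and $(3)\Rightarrow(2)$ are the easy directions, obtained by specializing the kite to the particular shape that encodes the equation $x\cdot b=a\cdot c$ (the kite with $A=C=B\times\{*\}$-style legs, exactly as in the proof of the naturally Mal'tsev proposition earlier in the paper), so I would state those two implications briefly and move the weight onto $(2)\Rightarrow(1)$.

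For $(2)\Rightarrow(1)$, assume cancellation and take an arbitrary admissible-or-not kite over $D$; I must show at most one $\varphi$. Suppose $\varphi,\psi\colon A\times_B C\to D$ both satisfy $\varphi e_1=\alpha=\psi e_1$ and $\varphi e_2=\gamma=\psi e_2$. Fix $(a,c)\in A\times_B C$, so $f(a)=g(c)=:b$. The key computation uses commutativity to multiply by a well-chosen element and then cancel. Consider the element $(a,c)\cdot(rb,sb)$ where $rb:=r(b)\in A$ and $sb:=s(b)\in C$, noting $(rb,sb)=e_1(rb)=e_2(sb)$ lies in the image of both split monos (using $f r=1_B=g s$ and $sfr=s$, $rgs=r$). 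Then
\[
\varphi\big((a,c)\cdot(rb,sb)\big)=\varphi(a,c)\cdot\varphi(rb,sb)=\varphi(a,c)\cdot\alpha(rb),
\]
and likewise $\psi\big((a,c)\cdot(rb,sb)\big)=\psi(a,c)\cdot\alpha(rb)$, where $\alpha(rb)=\beta(b)=\gamma(sb)$ by the kite identities $\alpha r=\beta=\gamma s$. On the other hand $(a,c)\cdot(rb,sb)=(a\cdot rb,\; c\cdot sb)$, and I want to re-express this as a product of an element of $\mathrm{im}(e_1)$ with an element of $\mathrm{im}(e_2)$ so that $\varphi$ and $\psi$ agree on it. Using commutativity and mediality-free rearrangement is not available (no associativity/mediality assumed!), so instead I rewrite $(a\cdot rb, c\cdot sb) = (a,sb)\cdot(rb,c)$ — here $(a,sb)$: we need $f(a)=g(sb)=gs(b)=b$, true; and $(rb,c)$: need $f(rb)=fr(b)=b=g(c)$, true. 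Now $(a,sb)=e_1(a)$ and $(rb,c)=e_2(c)$! Hence
\[
\varphi\big((a,c)\cdot(rb,sb)\big)=\varphi(e_1 a)\cdot\varphi(e_2 c)=\alpha(a)\cdot\gamma(c)=\psi(e_1 a)\cdot\psi(e_2 c)=\psi\big((a,c)\cdot(rb,sb)\big).
\]
Combining, $\varphi(a,c)\cdot\alpha(rb)=\psi(a,c)\cdot\alpha(rb)$, and cancellation gives $\varphi(a,c)=\psi(a,c)$. Since $(a,c)$ was arbitrary, $\varphi=\psi$, so $D$ is a weakly Mal'tsev object.

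The remaining equivalence $(1)\Leftrightarrow(3)$ I would handle as follows: $(3)$ is literally the instance of the weakly Mal'tsev condition applied to the specific kite built from $a,b,c\in D$ — take $B=D$, $A=C=D$ with $f=g=\mathrm{id}$... actually the honest choice is the kite with $A=\{(u,v)\in D\times D\}$ appropriately, mirroring the construction in the naturally-Mal'tsev proof above; one checks an admissibility morphism $\varphi$ exists iff the equation $x\cdot b=a\cdot c$ has a solution, and uniqueness of $\varphi$ corresponds to uniqueness of the solution, so $(1)\Rightarrow(3)$, and the same correspondence run backwards, together with $(2)\Leftrightarrow(1)$ already shown, closes the loop — thus $(3)\Rightarrow(2)$ reduces to showing that "at most one solution for every $a,b,c$" forces genuine cancellation, which is the special case $a=x$, $c=b$: if $x\cdot b=y\cdot b$ then both $x$ and $y$ solve $z\cdot b=x\cdot b=(x)\cdot(b)$ hence... one must be slightly careful, but taking $a:=x$, $c:=b$ the equation $z\cdot b=x\cdot b$ has $z=x$ as a solution, and $z=y$ is a solution precisely when $y\cdot b=x\cdot b$; uniqueness then yields $x=y$. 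The main obstacle is purely the bookkeeping in $(2)\Rightarrow(1)$: finding the right auxiliary element to multiply by and the right factorization $(a\cdot rb,\,c\cdot sb)=(a,sb)\cdot(rb,c)=e_1(a)\cdot e_2(c)$ that does not secretly use associativity or mediality — everything else is routine. I would present the argument in exactly the order above.
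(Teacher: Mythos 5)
Your implication $(2)\Rightarrow(1)$ is correct and nicely self-contained: the factorization $(a\cdot r(b),\,c\cdot s(b))=(a,s(b))\cdot(r(b),c)=e_1(a)\cdot e_2(c)$ uses only the componentwise operation and commutativity in $C$, the auxiliary element $(r(b),s(b))=e_1(r(b))=e_2(s(b))$ is legitimate, and cancelling $\alpha(r(b))$ gives $\varphi(a,c)=\psi(a,c)$; in fact it shows the stronger fact that any admissibility morphism is forced to send $(a,c)$ to the unique solution of $X\cdot\beta(b)=\alpha(a)\cdot\gamma(c)$. The equivalence $(2)\Leftrightarrow(3)$ is also fine. (For the record, the paper states this proposition without giving a proof, so there is nothing to compare against on that side.)

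The genuine gap is the direction $(1)\Rightarrow(3)$ (equivalently $(1)\Rightarrow(2)$), which you dismiss as "easy" but never prove, and the kites you gesture at do not do the job. With $f=g=\mathrm{id}$ the pullback is $D$ itself and $e_1=e_2=\mathrm{id}$, so uniqueness of $\varphi$ is automatic and encodes nothing. With the kite "mirroring the naturally Mal'tsev proof" ($A=C=D\times D$, $B=D$, $f=\pi_2$, $g=\pi_1$, $r=s=\Delta$, pullback $D^3$), an admissibility morphism is a single magma morphism $p\colon D^3\to D$ with $p(u,v,v)=u$, $p(v,v,w)=w$ — an internal Mal'tsev operation; its existence is not equivalent to solvability of $x\cdot b=a\cdot c$ for given $a,b,c$ (that equivalence used mediality in the earlier proposition), and uniqueness of such a $p$ says nothing about uniqueness of solutions unless you can promote two distinct solutions to two distinct admissibility morphisms, which you cannot on this kite. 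What the contrapositive actually requires is: from $x\neq y$ with $x\cdot b=y\cdot b$, construct concrete commutative magmas $A,B,C$ with the split-epi data and two distinct morphisms $A\times_B C\to D$ agreeing on $e_1,e_2$. This is the delicate part, and the obvious "generic" choice fails: if one takes free commutative magmas ($B$ free on $t$, $A$ free on $t,u$, $C$ free on $t,w$, with $u,w\mapsto t$), the pullback is generated by $(t,t),(u,t),(t,w),(u,w)$ but satisfies extra medial-type identities — e.g. $\bigl((t,t)(t,t)\bigr)\bigl((u,w)(u,w)\bigr)=\bigl((t,w)(t,w)\bigr)\bigl((u,t)(u,t)\bigr)$ — so the assignments sending $(u,w)$ to the two solutions $x$ and $y$ need not be well-defined morphisms, since $(b\cdot b)(x\cdot x)=(c\cdot c)(a\cdot a)$ does not follow from $x\cdot b=a\cdot c$ in a mere commutative magma. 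So a genuinely different witness construction is needed for $(1)\Rightarrow(3)$, and your proposal does not contain it; as written, only $(2)\Leftrightarrow(3)\Rightarrow\text{(uniqueness part of) }(1)$ is established.
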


This means that $\mathbf{CMag}^{*}=\mathbf{CCMag}$ thus generalizing the case of commutative semigroups. 

We observe further that the result is independent of the specific interaction between two or more magma  operations, provided they cooperate to ensure a form of cancellation. This is illustrated in the following example, which generalizes the notion of distributive lattices. By a \emph{dimagma}, we simply mean a set equipped with two binary operations (magmas), and we refer to it as a \emph{commutative dimagma} when both operations are commutative.

\begin{proposition} For a  commutative dimagma $D=(D,\cdot,+)$, in the category of  commutative dimagmas, the following conditions are equivalent:
\begin{enumerate}
\item it is a weakly Mal'tsev object;
\item it admits joint cancellation, i.e., for every $x,y,b\in D$, if $x\cdot b=y\cdot b$ and $x+ b=y+ b$ then $x=y$;
\item given any $a,b,c\in D$, the system of equations
\begin{align*}
\begin{array}{c}
x\cdot b=a\cdot c\\
x+ b=a+ c
\end{array}
\end{align*}
 has at most one solution $x\in D$. 
\end{enumerate}
\end{proposition}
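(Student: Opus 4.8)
The plan is to follow the template already established for the commutative magma and commutative dimagma-free case, proving the cycle $(1)\Rightarrow(3)\Rightarrow(2)\Rightarrow(1)$, since each implication is concrete and self-contained. The translation key is that a kite diagram over $D$, once we use that the ambient category is a variety and hence has binary products, unpacks into the data of a local product $A\times_B C$ together with two maps $\alpha,\gamma$ agreeing on the common part, and the admissibility morphism $\varphi$ is what glues them; the weakly Mal'tsev condition says at most one such gluing exists. So the first step is to spell out, in elementwise terms, exactly what a kite over a commutative dimagma looks like and what it means for $(e_1,e_2)\colon A\sqcup C\to A\times_B C$ to be an epimorphic cospan in $\mathbf{CDiMag}$ — namely, that any two homomorphisms out of $A\times_B C$ agreeing after precomposition with $e_1$ and $e_2$ are equal. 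The subobject of $A\times_B C$ generated by the images of $e_1$ and $e_2$ is the subalgebra generated by elements of the form $\langle 1_A, sf\rangle(a)$ and $\langle rg, 1_C\rangle(c)$, and using mediality-style manipulations (both operations commutative) one shows a general element of $A\times_B C$ is a "product of an $A$-part and a $C$-part" only up to the relation forced by $B$; this is precisely where joint cancellation enters.

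For $(1)\Rightarrow(3)$: assuming $D$ is a weakly Mal'tsev object, I would build, for a putative pair of solutions $x,x'$ to the system $x\cdot b = a\cdot c$, $x+b = a+c$, an explicit kite over $D$ (taking, say, $A=B=C=$ a free or suitably chosen algebra on generators realizing $a,b,c$, with $\alpha,\gamma$ encoding the two candidate solutions) for which both $x$ and $x'$ induce admissibility morphisms $\varphi,\varphi'$ with $\varphi e_1 = \varphi' e_1 = \alpha$ and $\varphi e_2 = \varphi' e_2 = \gamma$; uniqueness then forces $x = x'$. Concretely this mirrors the commutative-magma proof: one takes $B = \mathbb{N}^{?}$-type free structures or the specific three-element-generated algebra, and the key computation checks $\varphi,\varphi'$ are genuine homomorphisms, which is where both the $\cdot$ and $+$ equations are needed simultaneously. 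For $(3)\Rightarrow(2)$: specialize to $a=x$, $b=b$, $c=y$, noting $x\cdot b = y\cdot b$ and $x + b = y + b$ means both $x$ and $y$ solve the system with $(a,c)=(x,y)$ — wait, more carefully, one sets up the system so that $x\cdot b = y\cdot b$ and $x+b=y+b$ exhibits $x$ and $y$ as two solutions of $z\cdot b = x\cdot b$, $z+b = x+b$, hence $x=y$.

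For $(2)\Rightarrow(1)$: given joint cancellation, I must show every kite over $D$ has at most one admissibility morphism. Suppose $\varphi,\psi\colon A\times_B C\to D$ both satisfy $\varphi e_1 = \psi e_1 = \alpha$ and $\varphi e_2 = \psi e_2 = \gamma$. For an arbitrary element $w\in A\times_B C$, write $w = \langle w_1, w_2\rangle$ with $f(w_1) = g(w_2)$ in $B$. The trick, borrowed from the $\mathbf{CCMag}$ case, is to produce an auxiliary element $b_w\in D$ — something like $\alpha(r f(w_1)) = \gamma(s f(w_1))$, i.e. $\beta$ evaluated at the common point — such that $\varphi(w)\cdot b_w = \psi(w)\cdot b_w$ and $\varphi(w)+b_w = \psi(w)+b_w$, by expressing $w \cdot (\text{something}) $ inside $A\times_B C$ as a combination of things in the images of $e_1$ and $e_2$, using that both operations in $A\times_B C$ are computed componentwise and both are commutative and medial. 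Joint cancellation then gives $\varphi(w) = \psi(w)$.

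The main obstacle I anticipate is the algebraic bookkeeping in $(2)\Rightarrow(1)$: finding the right element $b_w$ and the right identity witnessing $\varphi(w)\cdot b_w = \psi(w)\cdot b_w$ (and the additive analogue) simultaneously. In the single-operation commutative-magma case this rests on the identity $\langle w_1,w_2\rangle\cdot\langle rg(w_2), s f(w_1)\rangle^{?}$ collapsing into a product of an $e_1$-image and an $e_2$-image after multiplying by a common factor; one must check that the \emph{same} choice of common factor works for both $\cdot$ and $+$, which is plausible precisely because the witnessing element can be taken to be $\beta$ at the coincidence point $fw_1 = gw_2$, and this point does not depend on which operation we use. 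If that uniformity holds — and it should, since the underlying local product / pullback is operation-agnostic — the two cases run in parallel and joint cancellation finishes the argument; if it does not, one would need to iterate, multiplying first by the $\cdot$-witness and then adjusting additively, which the joint-cancellation hypothesis is exactly strong enough to handle. I would therefore model the proof closely on the proof of the commutative magma proposition, carrying the second operation along in parallel throughout, and invoke joint cancellation at the single point where the magma proof invokes ordinary cancellation.
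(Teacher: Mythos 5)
Your reduction $(3)\Rightarrow(2)$ and your sketch of $(2)\Rightarrow(1)$ are essentially correct, and the uniformity you hoped for does hold. For $w=(w_1,w_2)\in A\times_B C$ put $b^{*}:=f(w_1)=g(w_2)$ and take the witness $u:=e_1(r(b^{*}))=e_2(s(b^{*}))=(r(b^{*}),s(b^{*}))$; then $w\cdot u=e_1(w_1)\cdot e_2(w_2)$ and $w+u=e_1(w_1)+e_2(w_2)$, using only commutativity in the second component, so for any two admissibility morphisms $\varphi,\psi$ one gets $\varphi(w)\cdot\beta(b^{*})=\alpha(w_1)\cdot\gamma(w_2)=\psi(w)\cdot\beta(b^{*})$ and the additive analogue with the \emph{same} witness $\beta(b^{*})$, and joint cancellation gives $\varphi=\psi$. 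One caveat: you repeatedly invoke ``mediality-style'' manipulations, but commutative dimagmas are \emph{not} assumed medial in this paper (no associativity, no medial law); luckily mediality is not needed in this step. (The paper itself states this proposition without proof, so the comparison here is on correctness only.)

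The genuine gap is $(1)\Rightarrow(3)$. You claim that each solution $x$ of the system ``induces'' an admissibility morphism on a kite built from free (or ``suitably chosen'') algebras, and that checking the homomorphism property ``is where the two equations are needed''. With the natural free kite ($B$ free on $t$; $A$ free on $u,t_A$ and $C$ free on $v,t_C$; $f,g$ collapsing generators to $t$; $r,s$ the evident sections; $\alpha\colon u\mapsto a,\ t_A\mapsto b$; $\gamma\colon v\mapsto c,\ t_C\mapsto b$) this does not work: the pullback $A\times_B C$ is generated by the four pairs $(u,v),(u,t_C),(t_A,v),(t_A,t_C)$, but it is a proper quotient of the free commutative dimagma on those four generators, and the extra ``alignment'' relations force identities on $D$ beyond the given system. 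For instance, the element $\bigl((u\cdot u)\cdot(t_A\cdot t_A),\,(v\cdot v)\cdot(t_C\cdot t_C)\bigr)$ equals both $\bigl((u,v)\cdot(u,v)\bigr)\cdot\bigl((t_A,t_C)\cdot(t_A,t_C)\bigr)$ and $e_1(u\cdot u)\cdot e_2(v\cdot v)$, so any admissibility morphism with value $x$ at $(u,v)$ must satisfy $(x\cdot x)\cdot(b\cdot b)=(a\cdot a)\cdot(c\cdot c)$, an identity that does not follow from $x\cdot b=a\cdot c$, $x+b=a+c$ and commutativity alone (it is exactly here that the commutative-semigroup template you are copying uses mediality, which is unavailable for magmas and dimagmas); infinitely many further such relations arise from deeper alignments. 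Hence two solutions of the system do not, by themselves, yield two admissibility morphisms on this kite --- your $\varphi,\varphi'$ may simply fail to exist --- and the hard direction $(1)\Rightarrow(3)$ is not established. A correct argument must produce a kite tailored to the data, in which the only constraint on the value at the extra generator is the given pair of equations; this construction is the missing idea in your proposal.
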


Clearly, this observation can be extended to any number of operations, whether finite or infinite. In particular, the case of lattices fits into this framework: it is well known that a lattice is distributive if and only if it admits joint cancellation. Thus, distributive lattices arise as a special case where the two operations---meet and join---cooperate to ensure cancellation.

As expected, the commutativity of the operations is also not essential, as illustrated by the following result. Here, by a \emph{unary monoid} we mean an algebraic system \( (D, \cdot, 1, \bar{(\,)}) \), where \( (D, \cdot, 1) \) is a monoid (not necessarily commutative), and \( \bar{()} \) is a unary operation such that for all \( x, y \in D \), the identity
\[
x\bar{y}y = y\bar{y}x
\]
holds.

\begin{proposition} For a  unary monoid $D=(D,\cdot,1,\bar{()})$, in the category of unary monoids, the following conditions are equivalent:
\begin{enumerate}
\item it is a weakly Mal'tsev object;
\item given any $a,b,c\in D$, the equation $x \bar{b}b=a\bar{b} c$ has at most one solution $x\in D$. 
\end{enumerate}
\end{proposition}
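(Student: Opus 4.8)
The plan is to follow the same pattern already used for commutative magmas and commutative dimagmas, translating the weakly Mal'tsev condition for an object into a concrete solvability/uniqueness statement about an equation in the algebraic structure. The logical skeleton will be: (2) $\Rightarrow$ (1), and (1) $\Rightarrow$ (2), where the passage through the explicit equation $x\bar{b}b = a\bar{b}c$ is the computational heart of both implications. I would first recall that in the category of unary monoids all finite limits exist (it is a variety), so local products are honest pullbacks of split epimorphisms, and a kite over $D$ as in~\eqref{kite} is admissible exactly when there is (at most) one $\varphi\colon A\times_B C\to D$ with $\varphi e_1=\alpha$, $\varphi e_2=\gamma$; weakly Mal'tsev-ness of $D$ is precisely the ``at most one'' clause for all such kites.

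For (1) $\Rightarrow$ (2), I would instantiate the kite with a minimal universal example detecting cancellation: take $B$ to be the free unary monoid on one generator $b$ (or a suitable small model), $A$ and $C$ copies of $B$ extended by generators, with $f,r,s,g$ the obvious split maps, so that $A\times_B C$ contains a canonical element corresponding to the formal triple $(a,b,c)$. Two distinct would-be ``solutions'' $x, x'$ of $x\bar{b}b = a\bar{b}c$ give two distinct morphisms $\varphi,\varphi'$ agreeing on the images of $e_1$ and $e_2$ (because $e_1$ picks out $a$-type data and $e_2$ picks out $c$-type data, while the combined element $\langle 1_A, sf\rangle$-type and $\langle rg,1_C\rangle$-type generators force the product element to be sent to $x\bar{b}b$-shaped expressions), contradicting the weakly Mal'tsev property. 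The key identity making this work is the defining axiom $x\bar{y}y = y\bar{y}x$, which is exactly what lets the ``middle'' element $\beta = \alpha r = \gamma s$ be expressed two ways and forces the relevant element of the pullback to coincide with $a\bar{b}c$ under any admissibility morphism.

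For (2) $\Rightarrow$ (1), I would argue directly that if $D$ has the stated unique-solution property, then any kite over $D$ admits at most one admissibility morphism. Given $\varphi\colon A\times_B C\to D$ with $\varphi e_1=\alpha$ and $\varphi e_2=\gamma$, and given any element $w$ of $A\times_B C$, one writes $w$ in terms of $p_1 w = u\in A$ and $p_2 w = v\in C$; using the unary monoid axiom $x\bar{y}y=y\bar{y}x$ together with $f r=1_B=g s$ and $\alpha r=\beta=\gamma s$, one shows $\varphi(w)$ must satisfy an equation of the form $\varphi(w)\,\bar{\beta'}\,\beta' = \alpha(u)\,\bar{\beta'}\,\gamma(v)$ for the appropriate ``$b$'' element $\beta'$ built from $f,s$; since by hypothesis such an equation has at most one solution, $\varphi(w)$ is determined, hence $\varphi$ is unique. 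Concretely this uses that $e_1$, $e_2$ jointly ``generate enough'': the pullback element $w$ relates to $e_1(u)$ and $e_2(v)$ through the two-sided structure, and the axiom converts this into the single cancellation-type equation.

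The main obstacle I expect is the bookkeeping in (1) $\Rightarrow$ (2): one must choose the test kite carefully enough that the element of $A\times_B C$ it produces is genuinely of the form required, that $e_1$ and $e_2$ hit only the data that a putative $\varphi$ would send to $x\bar b b$-type terms, and that two distinct solutions really do yield two distinct morphisms rather than being identified by relations in $A$, $B$, $C$. Getting the free/presented models of $A$, $B$, $C$ and the split maps exactly right — so that the universal element is not accidentally collapsed — is the delicate step; once the correct test diagram is in hand, the equivalence of (2) and (3) is immediate (they are verbatim restatements, since ``at most one solution'' \emph{is} condition (3)), and the implications reduce to manipulations with the single axiom $x\bar{y}y = y\bar{y}x$.
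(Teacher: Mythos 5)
The paper states this proposition without proof, so there is nothing to compare against; judging your argument on its own terms, the direction (2)~$\Rightarrow$~(1) is essentially correct and can be made fully precise along the lines you indicate: since unary monoids form a variety, the pullback is computed elementwise, and for $(u,v)\in A\times_B C$ with $w=f(u)=g(v)$ and $t=e_1(r(w))=e_2(s(w))=(r(w),s(w))$ the single axiom $x\bar y y=y\bar y x$, applied in the second coordinate, gives the identity $(u,v)\,\bar t\, t = e_1(u)\,\bar t\, e_2(v)$ in $A\times_B C$; applying any admissibility morphism $\varphi$ and using $\varphi(t)=\beta(w)$ yields $\varphi(u,v)\,\overline{\beta(w)}\,\beta(w)=\alpha(u)\,\overline{\beta(w)}\,\gamma(v)$, so condition (2) pins down $\varphi(u,v)$ at every element and hence forces uniqueness of $\varphi$. (A small slip: this proposition has only two conditions; your closing remark about ``(2) and (3) being verbatim restatements'' carries over the numbering of the magma and dimagma propositions.)

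The genuine gap is in (1)~$\Rightarrow$~(2). To contradict the weakly Mal'tsev property you must exhibit a kite over $D$ together with \emph{two} homomorphisms $\varphi_1\neq\varphi_2$ defined on all of $A\times_B C$ and agreeing with $\alpha$ on the image of $e_1$ and with $\gamma$ on the image of $e_2$. Your free-algebra test kite does not deliver this: the pullback $E$ of the kite built from $F(y,z)\to F(y)\leftarrow F(y,w)$ is not free and is not generated by $\mathrm{im}(e_1)\cup\mathrm{im}(e_2)$ (the element $(z,w)$ lies outside that subalgebra), and two distinct solutions $x_1\neq x_2$ of $x\bar b b=a\bar b c$ give no recipe for defining $\varphi_1,\varphi_2$ on such elements---indeed they give no evident recipe for even \emph{one} admissibility morphism on $E$, since the weakly Mal'tsev condition only asserts ``at most one'' and existence is not automatic. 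The counterexample kite has to be engineered so that two admissibility morphisms demonstrably exist, typically by building $A$, $B$, $C$ (and hence the pullback) out of $D$ itself; for instance, one wants the two candidate morphisms to arise as the evaluations $\xi\mapsto x_1$ and $\xi\mapsto x_2$ on an algebra of polynomials over $D$ in one indeterminate $\xi$ subject to the relation $\xi\bar b b=a\bar b c$, and then one must realize that algebra as a pullback of split epimorphisms in such a way that $e_1$ and $e_2$ land where the two evaluations agree. This construction is the real content of the direction (it is the analogue of what the cited preprint does for commutative semigroups), and your proposal only acknowledges it as ``delicate bookkeeping'' without supplying it; as written, the implication (1)~$\Rightarrow$~(2) is not proved.
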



It is clear that this property remains unchanged even when the algebras are equipped with additional structure, such as an order or a topology. This contrasts with the classical Mal'tsev context, where, for example, preordered groups are weakly Mal'tsev but not Mal'tsev \cite{MF37} (see also the introduction of \cite{MF38}). Further examples on weakly Mal'tsev categories can be found in \cite{MF24}.


\section{The Unification Theorem}


In this section, we refine the main theorem from \cite{MF38}. Instead of requiring that the class of spans \( \M \) is pullback-stable and contains all identity spans—which, as a consequence, includes all spans arising from local products—we now assume only that \( \M \) is closed under the kernel pair construction; that is, if a span \( (D, d, c) \) is in \( \M \), then the span \( (D(d,c), \dom, \cod) \) is also in \( \M \). We continue to require that for every local product
\begin{equation}\label{ another local product}
\vcenter{\xymatrix{
A \ar@<-.5ex>[r]_-{e_1} & A \times_B C \ar@<-.5ex>[l]_-{\pi_1} \ar@<.5ex>[r]^-{\pi_2} & C \ar@<.5ex>[l]^-{e_2}
}}
\end{equation}
the span \( (A \times_B C, \pi_1, \pi_2) \) belongs to \( \M \). The condition asserting the compatibility of a split square and a span is now replaced by the Kite Condition (item (11) in the following theorem).

\begin{theorem}\label{thm Main}
Let \( \C \) be a category with local products, and let \( \M \) be a class of spans in \( \C \) that is closed under the kernel pair construction and contains the span part of every local product. Then the following conditions are equivalent:
\begin{enumerate}
\item The functor $\Grpd(\C,\M)\to\RG(\C,\M)$ is an isomorphism;
\item The functor $\Grpd(\C,\M)\to\RG(\C,\M)$ has a section;
\item The functor $\Cat(\C,\M)\to\RG(\C,\M)$ is an isomorphism;
\item The functor $\Cat(\C,\M)\to\RG(\C,\M)$ has a section;
\item The functor $\UMG(\C,\M)\to\RG(\C,\M)$ is an isomorphism;
\item The functor $\UMG(\C,\M)\to\RG(\C,\M)$ has a section;
\item The functor $\PreGrpd(\C,\M)\to\Span(\C,\M)$ is an isomorphism;
\item The functor $\PreGrpd(\C,\M)\to\Span(\C,\M)$ has a section;
\item The functor $\MKite(\C,\M)\to\Kite(\C,\M)$ is an isomorphism;
\item The functor $\MKite(\C,\M)\to\Kite(\C,\M)$ has a section;

\item For every diagram in \( \C \) of the form
\begin{equation}\label{diag: kite main thm}
\vcenter{\xymatrix@!0@=4em{
& E \ar[dd]^-{\beta}
\ar@<.5ex>[dl]^-{p_1}
\ar@<-.5ex>[dr]_-{p_2}
\\
A \ar@<.5ex>[ru]^-{e_1} \ar[rd]_-{\alpha} 
&  & 
C \ar@<-.5ex>[lu]_-{e_2} \ar[ld]^-{\gamma} \\
& D \ar[dl]_{d} \ar[rd]^{c} \\
D_0 && D_1
}}
\end{equation}
in which:
\begin{itemize}
    \item the pair \( (p_1, p_2) \) is jointly monic;
    \item the spans \( (D, d, c) \) and \( (E, p_1, p_2) \) belong to \( \M \);
    \item the following identities hold:
    \begin{align*}
    & p_1 e_1 = 1_A, \quad p_2 e_2 = 1_C, \\
    & (e_1 p_1)(e_2 p_2) = (e_2 p_2)(e_1 p_1), \\
    & \alpha(p_1 e_2 p_2) = \beta = \gamma(p_2 e_1 p_1), \\
    & d \alpha p_1 = d \alpha(p_1 e_2 p_2), \quad c \gamma p_2 = c \gamma(p_2 e_1 p_1),
    \end{align*}
\end{itemize}
then there exists a unique morphism \( m \colon E \to D \) such that:
\begin{align*}
m e_1 &= \alpha, \\
m e_2 &= \gamma, \\
dm &=d\gamma p_2\\cm &=c\alpha p_1.    \end{align*}

\end{enumerate}

\end{theorem}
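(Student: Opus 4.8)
The plan is to establish, besides the trivial implications $(2k-1)\Rightarrow(2k)$ for $k=1,\dots,5$ (an isomorphism of categories in particular admits a section), the implications $(11)\Rightarrow(2k-1)$ for every $k$, the implications $(2k)\Rightarrow(6)$ for every $k$, and $(6)\Rightarrow(11)$. For each $k=1,\dots,5$ these produce the cycle $(2k-1)\Rightarrow(2k)\Rightarrow(6)\Rightarrow(11)\Rightarrow(2k-1)$, and since together the five cycles involve all eleven statements, they become equivalent.

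To prove $(11)\Rightarrow(2k-1)$ I would run the Kite Condition through the directed kites of the subsection on particular cases. Given a reflexive graph $(C_1,C_0,d,e,c)$ in $\RG(\C,\M)$, the kite~\eqref{diag: kite1} has for its top object the local product $C_1\times_{C_0}C_1$ of composable pairs, which lies in $\M$ by hypothesis, so the hypotheses of~\eqref{diag: kite main thm} are met and $(11)$ produces a unique unital multiplicative structure $m$ on the graph; morphisms of reflexive graphs lift uniquely by applying $(11)$ to~\eqref{diag: kite4}. This is $(5)$. Feeding $m$ into~\eqref{diag: kite2} — again a valid instance of~\eqref{diag: kite main thm}, its top object being a local product — forces $m$ to be associative, giving $(3)$; feeding the resulting internal category into~\eqref{diag: kite3} forces it to be an internal groupoid, giving $(1)$. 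For $(7)$, a span $(D,d,c)$ in $\M$ gives, via the kernel pair construction, the kite~\eqref{diag: kite5}, whose top object $D(d,c)$ is the local product obtained by pulling $c_1$ back along $d_2$ (both split by $\Delta$) and hence lies in $\M$; then $(11)$ furnishes a unique pregroupoid structure on the span, with morphisms lifting uniquely by~\eqref{diag: kite7}. Finally, $(9)$ is essentially $(11)$ reread: one checks that every object of $\Kite(\C,\M)$ induces a diagram of shape~\eqref{diag: kite main thm} fulfilling its hypotheses — the conditions $d\alpha=d\beta f$ and $c\beta g=c\gamma$ translating into $d\alpha p_1=d\alpha(p_1e_2p_2)$ and $c\gamma p_2=c\gamma(p_2e_1p_1)$ via $f\pi_1=g\pi_2$, and $(E,p_1,p_2)$ being a local product span, hence in $\M$ — so $(11)$ makes $\MKite(\C,\M)\to\Kite(\C,\M)$ bijective on objects, full, and faithful.

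For $(2k)\Rightarrow(6)$: composing a section of $\Grpd(\C,\M)\to\RG(\C,\M)$ or of $\Cat(\C,\M)\to\RG(\C,\M)$ with the forgetful functor into $\UMG(\C,\M)$ gives a section of $\UMG(\C,\M)\to\RG(\C,\M)$, so $(2)\Rightarrow(6)$ and $(4)\Rightarrow(6)$; precomposing a section of $\MKite(\C,\M)\to\Kite(\C,\M)$ with the functor $\RG(\C,\M)\to\Kite(\C,\M)$ sending a graph to the kite~\eqref{diag: kite1}, and reading off the multiplication, yields $(10)\Rightarrow(6)$; and for $(8)\Rightarrow(6)$ I would, given $(C_1,C_0,d,e,c)$ in $\RG(\C,\M)$, apply the section of $\PreGrpd(\C,\M)\to\Span(\C,\M)$ to the span $(C_1,d,c)$ to obtain $p\colon C_1(d,c)\to C_1$, then set $m:=p\circ\langle\pi_1,\,ed\pi_1,\,\pi_2\rangle\colon C_2\to C_1$ (with $\pi_1,\pi_2\colon C_2\to C_1$ the projections of the pullback $C_2$), checking via~\eqref{Mal'tsev-conditions}--\eqref{Domain-and-Codomain} that $me_1=1_{C_1}=me_2$, $dm=d\pi_2$, $cm=c\pi_1$, while the naturality of $p$ passes to $m$; this gives a section of $\UMG(\C,\M)\to\RG(\C,\M)$. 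The last implication $(6)\Rightarrow(11)$ is then the argument already used for ``$(2)\Rightarrow(3)$'' in Lemma~\ref{lemma 1}, carried out relative to $\M$: from $(D,d,c)\in\M$ one forms $(D(c,d),D,\dom,\Delta,\cod)$, which lies in $\RG(\C,\M)$ precisely because $\M$ is closed under the kernel pair construction, equips it with a natural multiplication $\mu$ via the section, and sets $m=\midop\circ\mu\circ\theta$ with $\theta=\langle\langle\alpha p_1,\alpha p_1,\beta\rangle,\langle\beta,\gamma p_2,\gamma p_2\rangle\rangle$; uniqueness of $m$ follows verbatim from the naturality of $\mu$ together with the auxiliary morphism $\delta$ of that lemma.

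The step I expect to be the main obstacle is $(11)\Rightarrow(1)$: one must verify at each stage that the object being fed to $(11)$ — the object $C_1\times_{C_0}C_1$ of composable pairs, the double pullback of composable triples appearing in~\eqref{diag: kite2} and~\eqref{diag: kite3}, and the pullback $D(d,c)$ underlying~\eqref{diag: kite5} — is the span part of a local product and therefore lies in $\M$, so that the Kite Condition applies; this is exactly what the two standing hypotheses on $\M$ are there to secure. The only genuinely new ingredient beyond Lemma~\ref{lemma 1} and the propositions that follow it is the passage in $(8)\Rightarrow(6)$ from a pregroupoid structure on a span to a unital multiplicative structure on a reflexive graph; the remainder is a relativization to $\M$ of arguments already in place.
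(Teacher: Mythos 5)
Your proposal is correct and follows essentially the same route as the paper: the core implication $(6)\Rightarrow(11)$ via the kernel pair construction, the section-induced multiplication $\mu$, and the morphisms $\theta$ and $\delta$, together with the reverse directions obtained by feeding the particular kites \eqref{diag: kite1}--\eqref{diag: kite5} (with naturality via \eqref{diag: kite4}/\eqref{diag: kite7}) into the Kite Condition, and the passage $(8)\Rightarrow(6)$ by $m(x,y)=p(x,1,y)$. The only differences are organizational (you prove $(11)\Rightarrow(2k-1)$ and $(2k)\Rightarrow(6)$ directly rather than routing through $(9)$ and $(8)$ as the paper does), which amounts to the same arguments.
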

\begin{proof}
It is clear that whenever a forgetful functor is an isomorphism, it necessarily admits a section. Since there are forgetful functors from internal groupoids to internal categories, and from internal categories to unital multiplicative graphs, we conclude that \((2) \Rightarrow (4) \Rightarrow (6)\). 

Similarly, we have \( (10) \Rightarrow (8) \Rightarrow (6) \). 

More specifically, the implication \( (10) \Rightarrow (8) \) follows directly from diagram~(\ref{diag: kite5}). To show \( (8) \Rightarrow (6) \), proceed as follows: take any reflexive graph whose span part belongs to \( \M \), and equip it with its canonical pregroupoid structure, say \( p(x, y, z) \). Then define the multiplication on the reflexive graph by setting \( m(x, y) = p(x, 1, y) \). The naturality of \( m \) follows directly from the naturality of \( p \).

Our main task is to show that \((6) \Rightarrow (11)\). Once this implication is established, we proceed to show that \((11)\) implies all other conditions in the theorem.


We proceed in a similar manner as in the proof of Lemma~\ref{lemma 1}.

Consider any kite diagram as in (\ref{diag: kite main thm}) satisfying the prescribed hypotheses. Since \( D(d,c) \) is in \( \M \), and \( \M \) is closed under the kernel pair construction, we conclude that the reflexive graph \[ (D(c,d), D, \dom, \Delta, \cod) \] is an object in \( \RG(\C, \M) \). 
By applying the section to the forgetful functor \( \UMG(\C,\M) \to \RG(\C,\M) \), we obtain the canonical multiplication morphism
\[
\mu \colon D(c,d) \times_D D(c,d) \to D(c,d).
\]
Moreover, we define a morphism
\[
\theta \colon E \to D(c,d) \times_D D(c,d)
\]
as
\[
\theta = \langle\langle \alpha p_1, \alpha p_1, \beta \rangle, \langle \beta, \gamma p_2, \gamma p_2 \rangle\rangle.
\]
The desired morphism \( m \colon E \to D \) is then obtained as \( m = \midop \circ \mu \circ \theta \).

The object \( D(c,d) \), along with the morphisms \( \dom, \midop, \cod \) and the diagonal \( \Delta \colon D \to D(c,d) \), are defined as discussed in the subsection on the kernel pair construction (see Subsection~\ref{subsec: kernel pair construction}). 

It is a matter of routine calculation to verify that the morphism \( \theta \)  is well defined, and that \( m \) satisfies the required conditions. This completes the proof of existence.

In order to prove uniqueness we observe that there exists also a morphism $\delta\colon{E\to E(p_1,p_2)\times_E E(p_1,p_2)}$ defined as \begin{align*}
\delta=\langle
\langle e_1p_1,e_1p_1,e_1p_1e_2p_2
\rangle,\langle e_1p_1e_2p_2,e_2p_2,e_2p_2
\rangle\rangle
\end{align*}
and it satisfies $\midop\circ\mu\circ
\delta=1_E$. Indeed, since $(p_1,p_2)$ is jointly monic and we have
\begin{align*}
p_1\circ\midop \circ\mu\circ\delta &=p_1\circ\cod\circ
\mu\circ\delta\\
&=p_1\circ\cod\circ
\pi_1\circ\delta\\
&=p_1\circ\cod\circ\langle e_1p_1,e_1p_1,e_1p_1e_2p_2
\rangle\\
&=p_1e_1p_1=p_1
\end{align*}
and
\begin{align*}
p_2\circ\midop \circ\mu\circ\delta &=p_2\circ\dom\circ
\mu\circ\delta\\
&=p_2\circ\dom\circ
\pi_2\circ\delta\\
&=p_2\circ\dom\circ\langle e_1p_1e_2p_2,e_2p_2,e_2p_2
\rangle\\
&=p_2e_2p_2=p_2
\end{align*}
we may conclude that $\midop\circ\mu\circ
\delta=1_E$.

Given that the span $(E,p_1,p_2)$ is in $\M$, and hence $$(E(,p_1,p_2),E,\dom,\Delta,\cod)$$ is in $\RG(\C,\M)$, by the  naturality of $\mu$, as displayed,
\begin{equation}\label{diag: mu is natural again}
\vcenter{\xymatrix{
E(p_1,p_2)\times_E E(p_1,p_2) \ar[d]_{m^3\times_m m^3}\ar@<0ex>[r]^-{\mu} & E(p_1,p_2) \ar@<0ex>[r]^-{\midop} \ar@<0ex>[d]^-{m^3} & E \ar@<0ex>[d]^-{m}\\
D(c,d)\times_D D(c,d) \ar[r]^-{\mu} & D(c,d) \ar[r]^-{\midop} & D
}}
\end{equation}
 and observing further that $\theta=(m^3\times_m m^3)\delta$ we obtain

 \begin{align*}
  m &= m1_E=m\circ\midop\circ\mu
  \circ\delta\\
  &=\midop\circ\mu\circ (m^3\times_m m^3)\circ \delta\\
  &=\midop\circ\mu\circ
  \theta. 
 \end{align*}
 This means that $m$ is uniquely determined as $\midop\circ\mu\circ\theta 
$.


We have thus established the implication \( (6) \Rightarrow (11) \).

Our final task is to show that condition \( (11) \) implies condition \( (9) \), and that all remaining conditions follow from there.

\begin{enumerate}
\item[] [\( (11) \Rightarrow (9) \)]  
Given a directed kite whose direction span lies in \( \M \), we may complete it with a local product as illustrated in diagram~(\ref{diag: kite with local product}). This yields a kite diagram satisfying the hypotheses of condition \( (11) \) in our theorem. It is clear that all the required assumptions are fulfilled, thereby guaranteeing the existence of the desired multiplicative structure on the directed kite. Naturality follows from the ability to construct a suitable kite diagram analogous to the one shown in diagram~(\ref{diag: kite7}).

\item[] [\( (9) \Rightarrow (7) \)]  Existence and uniqueness follows from  diagram $(\ref{diag: kite5})$ whereas naturality follows by a diagram similar to (\ref{diag: kite7}). 

\item[] [\( (9) \Rightarrow (5) \)]  Existence and uniqueness follows from  diagram $(\ref{diag: kite1})$ whereas naturality follows from diagram  (\ref{diag: kite4}). 

\item[] [\( (9) \Rightarrow (3) \)]  Existence and uniqueness follows from  diagram $(\ref{diag: kite2})$ whereas naturality follows from diagram  (\ref{diag: kite4}). 

\item[] [\( (9) \Rightarrow (1) \)]  Existence and uniqueness follows from  diagram $(\ref{diag: kite3})$ whereas naturality follows from diagram  (\ref{diag: kite4}). 

\end{enumerate}
Further details on the above observations can be found in \cite{MF7,MF17,MF38}.
\end{proof}

In the following section, we analyze the three well-known classes of spans that give rise to the three distinct types of the Mal'tsev condition.

\section{Naturally Mal'tsev, Mal'tsev, and Weakly Mal'tsev}

In order to articulate the three notions of Mal'tsev-type categories under consideration, we begin by introducing the corresponding classes of spans that characterize them. We also recall the standard definitions and introduce some auxiliary notation related to local products. These classes serve as parameters in the formulation of the main characterizations and provide a framework for a uniform treatment across varying levels of generality, suggesting that additional classes may emerge in future developments.

Let us consider, in a category $\C$ with local products, the following three classes of spans:
\begin{enumerate}
\item[$\M_0$:] The class of all spans $(D, d, c)$ for which the kernel pairs of $d$ and $c$ exist.
\item[$\M_1$:] The class of all spans $(D, d, c)$ for which the kernel pairs of $d$ and $c$ exist, and the pair $(d, c)$ is jointly monic.
\item[$\M_2$:] The class of all spans $(D, d, c)$ for which the kernel pairs of $d$ and $c$ exist, and the pair $(d, c)$ is jointly strongly monic.
\end{enumerate}

We adopt the following standard definitions:
\begin{enumerate}
\item $\C$ is said to be of \emph{naturally Mal'tsev type} if the Lawvere Condition holds; that is, every reflexive graph has a unique groupoid structure.
\item $\C$ is said to be of \emph{Mal'tsev type} if every reflexive relation is a tolerance relation; that is, reflexive and symmetric.
\item $\C$ is said to be of \emph{weakly Mal'tsev type} if, in every local product diagram
\begin{equation}\label{local product --- epimorphic}
\vcenter{\xymatrix{
A \ar@<-.5ex>[r]_-{e_1} & E \ar@<-.5ex>[l]_-{p_1} \ar@<.5ex>[r]^-{p_2} & C \ar@<.5ex>[l]^-{e_2}
}}
\end{equation}
the pair $(e_1, e_2)$ is jointly epimorphic.
\end{enumerate}

As a shorthand, we will say that a local product is \emph{epimorphic} when the pair \( (e_1, e_2) \) is jointly epimorphic. Similarly, we will say that a local product is a \emph{local coproduct} when, in the induced split square displayed in diagram~\eqref{split square}, the pair \( (e_1, e_2) \) forms the pushout of the span \( (B, r, s) \). In other words, the object \( E \) simultaneously realizes both the pullback \( A \times_B C \) and the pushout \( A +_B C \).

Finally, we will say that a local product, such as the one displayed in~\eqref{local product --- epimorphic}, is \emph{extremal} if for every span \( (D, d, c) \) in \( \M_1 \), and for every pair of morphisms \( \alpha \colon A \to D \), \( \gamma \colon C \to D \), satisfying the compatibility conditions \( d \alpha = d \gamma p_2 e_1 \) and \( c \gamma = c \alpha p_1 e_2 \), there exists a unique morphism \( m \colon E \to D \) such that
\[
dm = d \gamma p_2, \qquad cm = c \alpha p_1, \qquad m e_1 = \alpha, \qquad m e_2 = \gamma.
\]

Note that, in the presence of limits and colimits, such a local product is extremal if and only if for every commutative square
\[
\xymatrix{
A + C \ar[r]^-{[e_1, e_2]} \ar[d]_{[\alpha, \gamma]} & E \ar@{-->}[ld] \ar[d]^{\langle d \gamma p_2,\, c \alpha p_1 \rangle} \\
D \ar[r]_-{\langle d, c \rangle} & D_0 \times D_1
}
\]
in which \( \langle d, c \rangle \) is a monomorphism, there exists a unique diagonal morphism \( m \colon E \to D \) making the diagram commute. It follows that, in general, this notion lies strictly between the pair \( (e_1, e_2) \) being an extremal epimorphism and being a strong epimorphism.

\begin{theorem}\label{them: mainII}
Let $\C$ be a category with local products.
\begin{enumerate}
\item The category $\C$ is of naturally Mal'tsev type if and only if the equivalent conditions of Theorem~\ref{thm Main} hold for $\M = \M_0$. Moreover, assuming the existence of binary products and a terminal object, each object in $\C$ admits a canonical Mal'tsev operation. Furthermore, if pushouts of split monomorphisms along split monomorphisms exist, then every local product is also a local coproduct.

\item The category \( \C \) is of Mal'tsev type if and only if the equivalent conditions of Theorem~\ref{thm Main} hold for \( \M = \M_1 \). Moreover, these conditions are further equivalent to the requirement that every local product in \( \C \) is extremal.

\item Assuming the existence of equalizers, as well as the existence of kernel pairs for composites of a split epimorphism with an equalizer, the category~$\C$ is of weakly Mal'tsev type if and only if the equivalent conditions of Theorem~\ref{thm Main} hold for $\M = \M_2$.

\end{enumerate}
\end{theorem}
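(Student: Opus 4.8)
The plan is to treat the three items of Theorem~\ref{them: mainII} by reducing each one to an instance of Theorem~\ref{thm Main}, so that the real content is (a) checking that the relevant class $\M_i$ satisfies the two standing hypotheses of Theorem~\ref{thm Main} (closure under the kernel pair construction, and containing the span part of every local product), and (b) matching the classical Mal'tsev-type condition with one of the equivalent conditions in that theorem. For item (1), with $\M=\M_0$ the class of all spans whose legs admit kernel pairs, closure under the kernel pair construction is immediate from the definition of $\M_0$ together with the existence of local products (the span $(D(c,d),\dom,\cod)$ has legs that are composites of split epimorphisms, whose kernel pairs can be built from those already present), and the span part of a local product lies in $\M_0$ since $\pi_1,\pi_2$ are split epimorphisms with kernel pairs obtained by pulling back. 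Condition~(1) of Theorem~\ref{thm Main} for $\M_0$ then says precisely that $\Grpd(\C,\M_0)\to\RG(\C,\M_0)$ is an isomorphism; since every reflexive graph with legs admitting kernel pairs is in $\RG(\C,\M_0)$ — and here we should note that ``naturally Mal'tsev type'' as defined asks the Lawvere Condition for reflexive graphs, so one either restricts attention to graphs whose legs have kernel pairs or, in the finite-limit case, this is automatic — this is the Lawvere Condition. The ``moreover'' clauses follow from the last part of Theorem~\ref{thm Naturally} (the natural Mal'tsev operation $p_D$) and, for the local coproduct claim, from dualizing the construction of the section: the unique groupoid structure on the reflexive graph underlying the split square forces $(e_1,e_2)$ to be a pushout, using the hypothesis on pushouts of split monos.

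For item (2), with $\M=\M_1$ the jointly monic spans with legs admitting kernel pairs, I would first check the two standing hypotheses as above (jointly monicity is preserved by the kernel pair construction because $(\dom,\cod)$ being jointly monic on $D(c,d)$ reduces to $(d,c)$ jointly monic on $D$ via the pullback structure; and the span part of a local product, $(\pi_1,\pi_2)$, is jointly monic by definition of a pullback). Then I invoke Theorem~\ref{thm Main} for $\M_1$ and identify, say, condition~(7) — $\PreGrpd(\C,\M_1)\to\Span(\C,\M_1)$ is an isomorphism — with the statement ``every reflexive relation is a tolerance relation''. This is the classical link: a jointly monic span carrying a pregroupoid structure is the same as a difunctional relation, and for reflexive relations difunctionality is equivalent to symmetry (Mal'tsev-ness); the section assigning to each jointly monic span its (necessarily unique) pregroupoid structure exists iff every reflexive relation is symmetric, which is the standard Mal'tsev characterization. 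The additional equivalence with ``every local product is extremal'' is handled by unwinding the definition of extremal local product given just before the theorem: condition~(11) of Theorem~\ref{thm Main} specialized to $\M_1$ is, after noting that $(D,d,c)\in\M_1$ is jointly monic, exactly the universal property defining an extremal local product, so the two are literally the same condition.

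For item (3), with $\M=\M_2$ the jointly strongly monic spans with legs admitting kernel pairs, the extra hypotheses (equalizers, and kernel pairs of composites split-epi-after-equalizer) are exactly what is needed to verify closure under the kernel pair construction: one must show that if $(d,c)$ is jointly strongly monic then so is $(\dom,\cod)$ on $D(c,d)$, and this requires factoring the relevant morphisms through equalizers and checking the kernel pairs exist, which is where those hypotheses enter. Again the span part of a local product lies in $\M_2$ since $(\pi_1,\pi_2)$ is jointly strongly monic (a pullback projection pair is always jointly strongly monic). Having verified the hypotheses, Theorem~\ref{thm Main} applies with $\M_2$, and I would match condition~(9) or~(11) with weak Mal'tsev-ness: a kite diagram over $D$ with jointly strongly monic direction corresponds, via the local product completion in diagram~(\ref{diag: kite with local product}), to the jointly strong epimorphicity of $(e_1,e_2)$ — indeed ``at most one'' admissibility morphism is exactly joint epimorphicity of the cospan into the pullback when the target span is suitably monic, and strong monicity of the span is the precise dual condition ensuring this matches the weak Mal'tsev axiom. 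The expected main obstacle is item (3), specifically the verification that $\M_2$ is closed under the kernel pair construction: unlike jointly monic spans, jointly strongly monic spans do not obviously transfer their property through the iterated pullbacks and kernel pairs of the kernel pair construction without the stated auxiliary limit hypotheses, and getting the bookkeeping of factorization systems right there is the delicate point; a secondary subtlety, present in all three items, is being careful that ``every reflexive graph/relation'' in the classical definitions is correctly interpreted as ``every object of $\RG(\C,\M_i)$'', i.e. those whose legs admit kernel pairs, so that no strength is lost or gained in the translation.
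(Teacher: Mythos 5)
Your overall strategy (verify the two standing hypotheses of Theorem~\ref{thm Main} for each class $\M_i$, then translate one of its equivalent conditions into the classical Mal'tsev-type condition) is the paper's strategy, and your treatment of item (1) is essentially correct. But item (3) contains a genuine misplacement that leaves the hard direction unproved. The auxiliary hypotheses (equalizers, and kernel pairs of composites of a split epimorphism with an equalizer) are \emph{not} what is needed for closure of $\M_2$ under the kernel pair construction: that closure follows simply from the pullback-stability of jointly strongly monic pairs, together with the fact that $\dom$ and $\cod$ are split epimorphisms, whose kernel pairs exist as local products. Where those hypotheses really enter is in the implication you leave essentially unargued, namely that the conditions of Theorem~\ref{thm Main} for $\M_2$ force every local product to have $(e_1,e_2)$ jointly epimorphic. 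The paper's argument: given $u,v\colon E\to Z$ agreeing on $e_1$ and $e_2$, form $k=\eq(u,v)\colon D\to E$; the span $(D,p_1k,p_2k)$ is jointly strongly monic and its legs admit kernel pairs precisely because $p_ik$ is a split epimorphism composed with an equalizer; the Kite Condition applied to the resulting kite produces $m\colon E\to D$ with $mk=1_D$ and $km=1_E$, so $k$ is an isomorphism and $(e_1,e_2)$ is jointly epic. Your sketch covers only the easy direction (joint epimorphicity plus orthogonality of $(e_1,e_2)$ against the strong monomorphism $\langle d,c\rangle$ gives existence and uniqueness of $m$), and since you attribute the auxiliary hypotheses to the wrong step, you have no place in your argument where the equalizer trick could appear.

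In item (2) there is a smaller but real gap: condition (11) of Theorem~\ref{thm Main} for $\M_1$ is \emph{not} literally the extremality of local products. Condition (11) quantifies over all diagrams satisfying (1)--(3) of Proposition~\ref{prop1} together with the stated compatibility identities, and such diagrams need not satisfy the universal property (4), hence need not be local products; so only the implication ``Kite Condition $\Rightarrow$ every local product is extremal'' is immediate. For the converse the paper applies extremality to a specific local product: given a relation $(D,d,c)$ in $\M_1$, take the kernel pair construction with $A=D(d)$, $C=D(c)$, $B=D$, $\alpha=d_1$, $\gamma=c_2$; the extremal diagonal equips $(D,d,c)$ with a pregroupoid structure, i.e., difunctionality, which feeds back into the other conditions. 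Relatedly, the equivalence ``every reflexive relation is symmetric iff every relation is difunctional'', which you cite as classical, has to be re-proved in this limit-poor setting; the paper does so via the kernel pair construction (for the forward direction, using joint monicity essentially) and the instance $p(1,x,1)$ of the pregroupoid operation for the reverse one. Supplying these two arguments would close item (2); supplying the equalizer argument would close item (3).
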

\begin{proof}
Each of the classes $\M_0$, $\M_1$, and $\M_2$ is closed under the kernel pair construction and contains every span arising as a local product. Indeed, if $(D, d, c)$ is a span for which the kernel pairs of $d$ and $c$ exist, then the span $(D(d,c), \dom, \cod)$ obtained via the kernel pair construction also belongs to the same class, and both $\dom$ and $\cod$ admit kernel pairs, as they are split epimorphisms.

The case of $\M_0$ is immediate. For $\M_1$, it is well known that a span $(D, d, c)$ is a relation if and only if the induced span $(D(d,c), \dom, \cod)$ is a relation. In the case of $\M_2$, it suffices to note that jointly strongly monomorphic pairs are stable under pullback.

Let us now analyze each of the three cases:
\begin{enumerate}
\item The result follows from the fact that the first condition in Theorem~\ref{thm Main} is precisely the Lawvere Condition. The existence of a canonical Mal'tsev operation has been established in Theorem~\ref{thm Naturally}. 

It remains to show that the condition that every local product is a local coproduct can be added to the list. Assuming that pushouts of split monomorphisms along split monomorphisms exist, the Kite Condition implies that the canonical comparison morphism \( A +_B C \to A \times_B C \) is an isomorphism.

Conversely, given any span \( (D,d,c) \) in \( \M_0 \), we take its kernel pair construction and consider the associated kite diagram as illustrated in~(\ref{diag: kite5}). By hypothesis, this diagram can be completed by a local product in which the pair \( (e_1, e_2) \) forms the pushout of \( \Delta \) along itself. This yields the desired pregroupoid structure on the span \( (D,d,c) \).

\item Let us first show that the condition that every reflexive relation is a tolerance relation is equivalent to the condition that every relation is difunctional. This confirms that the Mal'tsev condition in our definition is indeed equivalent to the other conditions in Theorem~\ref{thm Main} for \( \M = \M_1 \).

Indeed, suppose that every reflexive relation is symmetric. Then, for any given relation \( (D, d, c) \), the kernel pair construction \( K(D, d, c) \) produces a reflexive relation whose symmetry equips \( (D, d, c) \) with a (unique) pregroupoid structure. This, in turn, implies that \( (D, d, c) \) is difunctional. 

It is important to note, however, that this argument relies on the assumption that the pair \( (d, c) \) is jointly monic. The construction does not, in general, yield a pregroupoid structure for arbitrary spans.

Conversely, assume that every relation is difunctional. It is well known that, under this assumption, every reflexive relation is necessarily symmetric. If \( p(x, y, z) \) expresses the difunctionality condition (as encoded by a pregroupoid structure), then the instance \( p(1, x, 1) \) yields the required symmetry.

It remains to show that the condition stating that every local product is extremal can be added to the list. Clearly, the Kite Condition relative to $\M_1$ implies that every local product is extremal, in the sense defined above. Conversely, given a span $(D,d,c)$ in $\M_1$, consider the kernel pair construction and take $E = A \times_B C$, where $A = D(d)$, $C = D(c)$, $B = D$, $\alpha = d_1$, and $\gamma = c_2$. The assumption that every local product is extremal ensures the existence of a unique diagonal morphism $m \colon E \to D$, which equips the span $(D,d,c)$ with a pregroupoid structure. This shows that $(D,d,c)$ is difunctional, as required.

 \item Recall that a span \( (D, d, c) \) is \emph{jointly strongly monic} if, in the presence of binary products, the morphism \( \langle d, c \rangle \colon D \to D_0 \times D_1 \) is a strong monomorphism—that is, it is orthogonal to the class of all epimorphisms. More specifically, if in the presence of binary products, for any commutative square
\[
\xymatrix{
X \ar[r]^e \ar[d]_f & Y \ar[d]^g \\
D \ar[r]_-{\langle d, c \rangle} & D_0 \times D_1
}
\]
with \( e \) an epimorphism and \( \langle d, c \rangle \circ f = g \circ e \), there exists a unique morphism \( m \colon Y \to D \) such that \( m \circ e = f \) and \( \langle d, c \rangle \circ m = g \). It is well known that this class is pullback stable and hence is closed under the kernel pair construction (for those spans admitting kernel pairs) and contains all spans that are obtained by local products.

If the pair \( (e_1, e_2) \) is jointly epimorphic, then for any diagram as in the Kite Condition, with the span part \( (D, d, c) \) lying in \( \M_2 \), one may construct an orthogonality diagram between the pair \( (e_1, e_2) \) and the pair \( (d, c) \). In this setting, the uniqueness condition for the diagonal morphism corresponds precisely to the existence of a unique morphism \( m \colon E \to D \) that satisfies the Kite Condition.
Conversely, assume the Kite Condition holds, and consider the equalizer of a pair of morphisms \( u, v \colon E \to Z \) such that \( u e_1 = u e_2 \) and \( v e_1 = v e_2 \). Let \( k = \mathrm{eq}(u, v) \colon D \to E \) be the equalizing morphism. Then the span \( (D, p_1 k, p_2 k) \) lies in \( \M_2 \), assuming that the composition of a split epimorphism with an equalizer admits a kernel pair. The Kite Condition then ensures the existence of a unique morphism \( m \colon E \to D \) such that \( m \circ k = \mathrm{id}_D \) and \( k \circ m = \mathrm{id}_E \), thereby showing that \( k \) is an isomorphism. This proves that the pair \( (e_1, e_2) \) is jointly epimorphic.
\end{enumerate}
This concludes the proof.
\end{proof}


\section{Conclusion}



It is well established that the appropriate categorical generalization of a difunctional relation is a pregroupoid \cite{K1,K2}. Similarly, the natural generalization of a preorder—understood as a reflexive and transitive relation—is a unital multiplicative graph \cite{JP}. However, the correct categorical analogue of a tolerance relation in the sense of Poincaré—that is, a reflexive and symmetric relation—remains an open question. In parallel, it would be desirable to develop a notion of permutability of congruences within a purely categorical framework, independent of assumptions specific to regular categories.


In this paper, we have revisited the notions of Mal'tsev, naturally Mal'tsev, and weakly Mal'tsev categories from a unified perspective based on classes of spans and the concept of local products. While these notions differ in generality and prominence, our approach highlights their structural similarities. By relaxing standard assumptions—such as the existence of all finite limits—we have shown that key results continue to hold in more general categorical settings.

The introduction of multiplicative directed kites and weakly Mal'tsev objects offers new tools for analyzing internal categorical structures, extending beyond the classical regular or finitely complete contexts. These generalizations not only broaden the applicability of Mal'tsev-type conditions but also reveal new categorical phenomena that merit further exploration.



\section*{Acknowledgements}
This work has previously been funded by FCT/MCTES (PIDDAC) through the projects: Associate Laboratory ARISE LA-P-0112-2020; UIDP-04044-2020; UIDB-04044-2020; PAMI–ROTEIRO-0328-2013 (02\-2158); MATIS (CENTRO-01-0145-FEDER-000014 - 3362); CENTRO-01-0247-FEDER-(069665, 039969); as well as POCI-01-0247-FEDER-(069603, 039958, 039863, 024533); by CDRSP and ESTG from the Polytechnic of Leiria.

Furthermore, the author acknowledges the project Fruit.PV and Fundação para a Ciência e a Tecnologia (FCT) for providing financial support through the CDRSP Base Funding project (DOI: 10.54499/UI\-DB/04044/2020).

\end{document}